\documentclass[11pt,reqno]{amsart}

\usepackage[T1]{fontenc}
\usepackage{lmodern}
\usepackage{microtype}
\usepackage{mathtools,amssymb,amsmath,amsthm}
\usepackage{geometry}
\usepackage{hyperref}
\usepackage{aliascnt}
\usepackage{array}

\makeatletter
\g@addto@macro\UrlBreaks{\do\_\do\.}
\makeatother

\allowdisplaybreaks[2]
\hypersetup{
  colorlinks=true,
  linkcolor=blue,
  citecolor=blue,
  urlcolor=blue,
  pdfauthor={Yechi Zhou},
  pdftitle={The higher-dimensional Shepp problem: an exact criterion for random ball coverings of tori},
  pdfsubject={Random coverings and the higher-dimensional Shepp problem},
  pdfkeywords={random covering, random ball covering, Shepp criterion, Poisson process, positive association, population extinction}
}

\newcommand{\T}{\mathbb T^d}
\newcommand{\R}{\mathbb R}
\newcommand{\N}{\mathbb N}
\newcommand{\Pp}{\mathbb P}
\newcommand{\Ee}{\mathbb E}
\newcommand{\ind}{\mathbf 1}
\newcommand{\e}{\mathrm e}
\newcommand{\dd}{\,\mathrm d}
\newcommand{\Pois}{\operatorname{Pois}}
\newcommand{\Cov}{\operatorname{Cov}}
\newcommand{\Cloud}{\operatorname{Cloud}}
\newcommand{\Pop}{\operatorname{Pop}}
\newcommand{\whatN}{\widehat N}
\newcommand{\whatA}{\widehat A}

\newtheorem{theorem}{Theorem}[section]
\newaliascnt{lemma}{theorem}
\newtheorem{lemma}[lemma]{Lemma}
\aliascntresetthe{lemma}
\newaliascnt{proposition}{theorem}
\newtheorem{proposition}[proposition]{Proposition}
\aliascntresetthe{proposition}

\usepackage[nameinlink,capitalize,noabbrev]{cleveref}
\crefname{theorem}{Theorem}{Theorems}
\crefname{lemma}{Lemma}{Lemmas}
\crefname{proposition}{Proposition}{Propositions}

\title[The higher-dimensional Shepp problem]{The higher-dimensional Shepp problem: an exact criterion for random ball coverings of tori}
\author{Yechi Zhou}
\address{School of Mathematical Sciences, Fudan University, Shanghai, China}
\email{25210180114@m.fudan.edu.cn}
\date{}
\subjclass[2020]{60D05, 60G55, 28A80}
\keywords{random covering, random ball covering, Shepp criterion, Poisson process, positive association, population extinction}

\begin{document}

\begin{abstract}
We solve the Euclidean-ball case of the higher-dimensional Shepp covering
problem.  More precisely, we give an exact criterion for full limsup coverage
of the $d$-dimensional torus, $d\ge2$, by independently centered Euclidean
balls with an arbitrary decreasing sequence of radii.  Let
$X_1,X_2,\ldots$ be independent Haar-uniform points, let
$r_1\ge r_2\ge\cdots\downarrow0$, and put
$u_n(z)=m(B(0,r_n)\cap B(z,r_n))$ and $H(z)=\sum_{n\ge1}u_n(z)$.  Then every
point belongs to infinitely many of the balls $B(X_n,r_n)$ almost surely if
and only if $\int_{\mathbb T^d}\exp(H)\,\dd m=\infty$.  In dimension one this
condition is equivalent to Shepp's criterion.  No regular-variation or
comparable-scale assumption is imposed on the radii.  The main difficulty is
shared noise: after spatial decomposition, the same Poisson input acts on many
uncovered cells, so their descendants are not conditionally independent.  We
overcome this by establishing an extinction bound for monotone population
recursions driven by positively associated innovations.  Together with
Poissonization and spatial localization this proves sufficiency, while a
second-moment estimate and the zero--one law prove necessity.
\end{abstract}

\maketitle

\section{Introduction}

Given an arbitrary decreasing sequence of radii, when do independently
centered balls cover every point of a higher-dimensional torus infinitely
often?  This is the ball version of a classical random covering problem.
Dvoretzky introduced the circle problem \cite{Dvoretzky1956}, and Shepp found
its exact criterion \cite{Shepp1972}; see also Kahane \cite{Kahane1985} and
subsequent work on small random intervals such as \cite{FanWu2004}.

The higher-dimensional problem has resisted the one-dimensional theory.  Work
of Hoffmann-J\o rgensen \cite{HoffmannJorgensen1973} and El H\'elou
\cite{ElHelou1978} gave general results for metric spaces and tori, while
Kahane obtained a complete solution for homothetic simplexes and explicitly
left the corresponding ball and cube problems open \cite{Kahane1990}.
Kahane's higher-dimensional sufficient condition does not apply to Euclidean
balls, as Bierm\'e and Estrade later emphasized \cite{BiermeEstrade2012}.
Related developments include Janson's sharp asymptotic laws for small
translated sets \cite{Janson1986} and the study of random limsup sets through
Hausdorff dimension and large-intersection properties
\cite{JarvenpaaEtAl2014,Persson2015}.

We give an exact criterion for the Euclidean-ball problem in every dimension
$d\ge2$ and for every prescribed nonincreasing radius sequence.  The condition
is expressed directly through the overlaps of the balls.

Let $m$ denote normalized Haar measure on $\mathbb T^d$, let
$X_1,X_2,\ldots$ be independent with law $m$, and let
$r_1\ge r_2\ge\cdots\downarrow0$.  Write
$E_r=\limsup_n B(X_n,r_n)$ for the set of points covered infinitely often.
For $z\in\mathbb T^d$, define the overlap function at scale $n$ by the volume
of two radius-$r_n$ balls whose centers differ by $z$.  The main result is
\begin{equation}\label{eq:introduction-criterion}
 \left.
 \begin{aligned}
  u_n(z)&:=m\bigl(B(0,r_n)\cap B(z,r_n)\bigr),\\
  H(z)&:=\sum_{n\ge1}u_n(z)
 \end{aligned}
 \right\}
 \qquad
 \Pp(E_r=\mathbb T^d)=1
 \quad\Longleftrightarrow\quad
 \int_{\mathbb T^d}\e^{H(z)}\,\dd m(z)=\infty.
\end{equation}
The criterion has two features worth emphasizing.  It is intrinsic: it is
expressed only through the overlaps of the prescribed balls and not through
the auxiliary scales used in the proof.  It also applies to every decreasing
radius sequence, without regular-variation or comparable-scale assumptions.
When $d=1$, the overlap integral is equivalent to Shepp's series criterion;
this reduction is given in \Cref{sec:model}.  Thus
\eqref{eq:introduction-criterion} supplies the higher-dimensional ball
counterpart of the exact circle theorem.

The sufficiency direction contains the main difficulty.  After Poissonization
and spatial decomposition, the uncovered set is represented by the dyadic
cells that it intersects.  This population is not a branching process: the
same newly revealed Poisson configuration acts on every current cell, so the
descendants of different cells are not conditionally independent.  The
available structure is instead monotonicity of the deletion maps together
with positive association of increasing functions of a Poisson process
\cite[Theorem~20.4]{LastPenrose2018}.

To handle this shared noise, we establish an extinction criterion for
positively associated population recursions.  The abstract recursion is
defined in \Cref{sec:extinction}: $\Xi_j$ is the population at time $j$,
$m_j=\Ee|\Xi_j|$ is its actual mean, and $\delta_j$ is the uniform
one-particle killing lower bound; see
\eqref{eq:shared-update}, \eqref{eq:actual-mean}, and
\eqref{eq:abstract-killing}.  A backward survival function and a quadratic
potential give
\begin{equation}\label{eq:introduction-survival-bound}
 \Pp(\Xi_N\ne0)
 \le
 \left[
  \frac{2}{3m_N}
  +\frac16\sum_{j<N}\frac{\delta_j}{m_j}
 \right]^{-1}.
\end{equation}
Consequently, divergence of $\sum_j\delta_j/m_j$ forces extinction.  In the
geometric application, direct estimates of the actual uncovered-cell process
show that the integral condition in \eqref{eq:introduction-criterion} implies
this divergence.  The resulting bound is a shared-noise analogue of classical
survival estimates for branching processes in varying environments; the
precise comparison is given in \Cref{sec:extinction}.

For necessity, the normalized uncovered volume has a uniformly bounded
second-moment ratio whenever $\int\e^H\,\dd m<\infty$.  The
Paley--Zygmund inequality then gives a positive probability of a residual
uncovered point, and Kolmogorov's zero--one law upgrades this to almost-sure
failure of full limsup covering.

The paper is organized as follows.  \Cref{sec:model} states the model and the
main theorem.  The next two sections convert the exponential-overlap condition
into quantitative information at a sequence of spatial scales and construct
the corresponding Poisson decomposition.  The population estimate is then
proved abstractly and applied to the uncovered dyadic cells.  The final three
sections remove the Poissonization, prove necessity, and assemble the two
directions.  Measurability and the standard probabilistic inputs are recorded
in the appendices.

\section{Model and main result}\label{sec:model}

Let
\[
 \T:=(\R/\mathbb Z)^d,\quad d\ge2,\quad
 X_n\stackrel{\mathrm{iid}}{\sim}m,\qquad
 r_1\ge r_2\ge\cdots>0,\quad r_n\downarrow0,
\]
where $d_{\T}$ is the quotient Euclidean metric, $m$ is normalized Haar
measure, and, for every metric space $(E,d_E)$,
\[
 m(\T)=1,\qquad |z|:=d_{\T}(z,0),\qquad
 B_E(x,r):=\{y\in E:d_E(x,y)<r\},\qquad
 \kappa_d:=\mathcal L^d(B_{\R^d}(0,1)).
\]
We abbreviate $B_{\T}$ to $B$ and set
\[
 E_r:=\bigcap_{M\ge1}\bigcup_{n\ge M}B(X_n,r_n),\qquad
 u_n(z):=m(B(0,r_n)\cap B(z,r_n)),\qquad
 H(z):=\sum_{n\ge1}u_n(z).
\]

\begin{theorem}[Main criterion]\label{thm:main}
\[
 \Pp(E_r=\T)=1
 \quad\Longleftrightarrow\quad
 \int_{\T}\e^{H(z)}\,\dd m(z)=\infty.
\]
\end{theorem}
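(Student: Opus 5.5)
We prove the two directions separately, following the route announced in the introduction. Necessity (divergence of $\int\e^H\,\dd m$ is necessary) uses a second-moment argument. Sufficiency uses Poissonization, spatial localization at a sequence of scales, and the shared-noise extinction bound \eqref{eq:introduction-survival-bound}.

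\emph{Necessity.} Suppose $\int_{\T}\e^{H}\,\dd m<\infty$. For $N\ge M$, let $U_{M,N}:=\T\setminus\bigcup_{M\le n\le N}B(X_n,r_n)$ and $Y_{M,N}:=m(U_{M,N})$. By independence and translation invariance,
\[
\Ee Y_{M,N}=\prod_{n=M}^N(1-v_n),\qquad \Ee Y_{M,N}^2=\int_{\T}\prod_{n=M}^N\bigl(1-2v_n+u_n(z)\bigr)\dd m(z),
\]
where $v_n=m(B(0,r_n))$. The ratio of the integrand to $(\Ee Y_{M,N})^2$ is $\prod_n(1+(u_n-v_n^2)/(1-v_n)^2)$, which is at most $C_M\e^{H(z)}$ once $M$ is large enough that $v_n\le1/2$ for $n\ge M$. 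Hence $\Ee Y^2/(\Ee Y)^2\le C$ uniformly in $N$, and Paley--Zygmund gives $\Pp(Y_{M,N}>0)\ge c>0$ for every $N$.

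The sets $U_{M,N}$ are not closed, since the balls are open. We therefore pass to the closed sets $\T\setminus\bigcup_{n=M}^NB(X_n,r_n)$ and use compactness: a decreasing sequence of nonempty compact sets has nonempty intersection. This gives $\Pp(\T\not\subset\bigcup_{n\ge M}B(X_n,r_n))\ge c$. We will shrink radii slightly to handle the boundary, as the paper's measurability appendix presumably does.

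The event $\{E_r\neq\T\}$ is a tail event. Indeed, $E_r\ne\T$ means some point escapes all balls with index $\ge M$ for some $M$, so the event is $\bigcup_M\{\T\not\subset\bigcup_{n\ge M}B(X_n,r_n)\}$, which is monotone in $M$ and hence tail. Its probability is at least $c$, so Kolmogorov's zero--one law gives probability one.

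\emph{Sufficiency.} Assume $\int\e^H=\infty$. Fix $M$; we must show $\T\subset\bigcup_{n\ge M}B(X_n,r_n)$ a.s. First Poissonize: replace the $X_n$ by Poisson processes of intensity slightly below $1$ per index, coupled so that they are dominated by the original points up to a summable error. Then choose scales $\rho_j\downarrow0$, with dyadic side lengths adapted to the radii, and group the indices into blocks $I_j=\{n:r_n\in[\rho_{j+1},\rho_j)\}$.

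Let $\Xi_j$ be the set of dyadic cells of side comparable to $\rho_j$ (times a small constant) that meet the uncovered set after blocks $<j$. Passing from $\Xi_j$ to $\Xi_{j+1}$ subdivides each surviving cell and deletes subcells using the fresh Poisson input of block $j$. This deletion map is monotone, and the events involved are increasing in the Poisson configuration, so positive association \cite[Theorem~20.4]{LastPenrose2018} holds. The abstract extinction bound from \Cref{sec:extinction} then gives $\Pp(\Xi_N\ne0)\le[\tfrac{2}{3m_N}+\tfrac16\sum_{j<N}\delta_j/m_j]^{-1}$.

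It remains to prove $\sum_j\delta_j/m_j=\infty$. We will compute
\[
m_j\asymp\rho_j^{-d}\exp\Bigl(-\sum_{n<\text{block }j}v_n\Bigr),
\]
and take $\delta_j$ to be a lower bound on the probability that a single cell is entirely covered by block $j$. That lower bound is of order $\min(1,\sum_{n\in I_j}v_n\rho_j^{-d}\cdot\rho_j^{d})$. The final step is to show, by a layer-cake decomposition of $\int\e^H$ over the annuli $|z|\asymp\rho_j$, that $\int\e^H=\infty$ forces this series to diverge. On the annulus $|z|\sim\rho_j$ we have $H(z)\approx\sum_{n:r_n\gtrsim\rho_j}v_n$, up to bounded errors coming from the factor $1-|z|/(2r_n)$ in the overlap.

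\emph{Main obstacle.} We expect the hardest step to be this last comparison, matching $\delta_j/m_j$ to the annular pieces of $\int\e^H$ for arbitrary radius sequences with no comparable-scale assumption. Blocks can be huge or empty, and the partial overlaps $u_n(z)$ with $|z|$ comparable to $r_n$ contribute non-negligibly. The first thing we would try is choosing $\rho_j$ adaptively, so that each block carries total volume $\sum_{I_j}v_n\rho_j^{-d}$ bounded above and below. Where that is impossible, we would split long blocks and merge sparse ones, and then check that $m_j$ is estimated from the actual process rather than from the product formula.
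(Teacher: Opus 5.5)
Your necessity argument is the paper's: Paley--Zygmund for the uncovered volume, nested compact sets, then Kolmogorov's zero--one law. Two small corrections. First, $C_M<\infty$ because $\sum_nv_n^2=\int_{\T}H\,\dd m\le\int_{\T}\e^{H}\,\dd m<\infty$, which you should state. Second, $U_{M,N}$ is the complement of a finite union of open balls, so it is already compact and nothing needs shrinking. Shrinking radii would in fact go the wrong way, since non-coverage by smaller balls says nothing about the original balls.

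Your sufficiency architecture (Poissonize, track uncovered dyadic cells, apply the shared-noise extinction bound) is also the paper's, but it has two genuine gaps. The first is the Poissonization.
\begin{itemize}
\item No per-index coupling can dominate a $\Pois(\lambda)$ cloud by the single point $X_n$, because $\Pp(\Pois(\lambda)\ge2)$ does not tend to $0$, so the errors are not summable.
\item A fixed intensity $\lambda<1$ destroys the sharp constant. For the intensity-$\lambda$ Poisson model the second-moment ratio is bounded by $\int_{\T}\e^{\lambda H}\,\dd m$. If $v_n=1/n$ for large $n$, then $\int_{\T}\e^{H}\,\dd m=\infty$ but $\int_{\T}\e^{\lambda H}\,\dd m<\infty$. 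So that model almost surely fails to cover, although the theorem asserts covering.
\end{itemize}
The paper avoids both problems as follows. It keeps intensity exactly one off the sparse reserve $\mathcal R=\{\lceil q^{3/2}\rceil:q\ge1\}$, whose volume cost $\sum_{n\in\mathcal R}v_n$ is finite when $\sum_nv_n^{4/3}<\infty$. It realizes the $q$th Poisson center of type $n$ as the original center with index $\operatorname{rk}(n,q)=\sum_{m<n}\widetilde P_m+q$. Because the reserve absorbs the Poisson fluctuations, this index is eventually at most $n$, so the original ball has radius at least $r_n$. The case $\sum_nv_n^{4/3}=\infty$ is handled separately by a direct net and Borel--Cantelli argument.

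The second gap is the choice of spatial scale, which you rightly flag as the main obstacle but do not resolve. Your claim that $H(z)\approx\sum_{r_n\gtrsim|z|}v_n$ up to bounded errors is false. By the cone bounds, on $r_{N+1}<t<r_N$ the relevant profile is $G(t)=A_N-S_Nt$ with $S_N=\sum_{n\le N}v_n/r_n$. The term $tS_N$ is unbounded whenever a large volume mass sits at comparable radii.

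For the same reason, your mean estimate cannot be obtained for cells of side comparable to the radii. The single-center argument gives only $\exp\bigl(-\sum_n\kappa_d(r_n-\rho)^d\bigr)$, and the loss $\sum_n\bigl(v_n-\kappa_d(r_n-\rho)^d\bigr)\asymp\rho S$ is not $O(1)$. Splitting and merging blocks by volume does not help, because it never shrinks the cells as mass accumulates at a fixed radius.

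The missing idea is the cumulative-intensity scale $d/S_N$. The paper assigns radius types to dyadic scales via $\sigma_N=\min\{r_N,d/S_N\}$, so that at scale $\ell_k$ both $r_n\ge\ell_k$ and $\ell_kS_{\widehat N_k}\le d$ hold. This has three consequences:
\begin{itemize}
\item It gives the one-cell bound $m_{k,p-1}\le C_d\ell_k^{-d}\e^{-a_{k,p-1}}$ with only an $O(1)$ loss.
\item The packet-domination inequality $\e^{G(t)}\le1+\e^d\sum_k(\e^{\widehat A_k}-\e^{\widehat A_{k-1}})\e^{-dt/(2\ell_k)}$ turns $\int_{\T}\e^{H}\,\dd m=\infty$ into $\sum_k\ell_k^d(\e^{\widehat A_k}-\e^{\widehat A_{k-1}})=\infty$.
\item Each packet is cut into Poisson sub-blocks of volume at most one, so the killing bound $(1-\e^{-q_d})b_{k,p}$ is never capped at $1$. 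This then gives $\sum_j\delta_j/m_j=\infty$.
\end{itemize}
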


\paragraph{One-dimensional normalization.}
On the circle, write \(\lambda_n:=2r_n\) for the arc length and delete a
finite prefix so that \(\lambda_n<1/2\).  In the coordinate
\(t\in[-1/2,1/2]\),
\[
 u_n(t)=(\lambda_n-|t|)_+.
\]
If \(L_N:=\sum_{n\le N}\lambda_n\) and
\(\lambda_{N+1}\le t<\lambda_N\), then
\[
 \sum_{n\ge1}(\lambda_n-t)_+=L_N-Nt,
\]
and therefore
\begin{equation}\label{eq:d1-check}
 \int_0^{\lambda_1}
 \exp\!\left(\sum_{n\ge1}(\lambda_n-t)_+\right)\,\dd t
 =
 \sum_{N\ge1}\e^{L_N}
 \int_{\lambda_{N+1}}^{\lambda_N}\e^{-Nt}\,\dd t.
\end{equation}
The standard summation-by-parts comparison for decreasing
\((\lambda_n)_n\) gives
\[
 \int_0^{\lambda_1}
 \exp\!\left(\sum_{n\ge1}(\lambda_n-t)_+\right)\,\dd t=\infty
 \quad\Longleftrightarrow\quad
 \sum_{N\ge1}\frac{\e^{L_N}}{N^2}=\infty,
\]
which is Shepp's criterion \cite{Shepp1972,Kahane1985}.

For $M\ge0$,
\[
 H^{(M)}:=\sum_{n>M}u_n,\qquad
 0\le H-H^{(M)}\le\sum_{n\le M}v_n\le M,\qquad
 v_n:=m(B(0,r_n)).
\]
Hence
\[
 \int\e^H\,\dd m=\infty
 \Longleftrightarrow
 \int\e^{H^{(M)}}\,\dd m=\infty,\qquad
 \limsup_nB(X_n,r_n)=\limsup_{n>M}B(X_n,r_n).
\]
After deleting a finite prefix,
\[
 r_n<\frac14,\qquad v_n=\kappa_dr_n^d,\qquad
 A_0:=S_0:=0,\qquad
 A_N:=\sum_{n\le N}v_n,\qquad
 S_N:=\sum_{n\le N}\frac{v_n}{r_n}.
\]
Throughout,
\[
 0<c_d,C_d<\infty,\qquad c_d,C_d\text{ depend only on }d
\]
and may change from line to line.
Translation invariance gives
\begin{equation}\label{eq:int-u}
 \int_{\T}u_n(z)\,\dd m(z)=v_n^2.
\end{equation}

The sufficiency dependency chain is
\begin{equation}\label{eq:dependency-chain}
\begin{aligned}
 \int_{\T}\e^H\,\dd m=\infty
 &\overset{\text{\Cref{lem:analytic-mass}}}{\Longrightarrow}
 \text{localized scale-mass divergence}\\
 &\overset{\text{\Cref{eq:actual-resistance-diverges}}}{\Longrightarrow}
 \text{shared-noise resistance divergence}\\
 &\overset{\text{\Cref{thm:shared-resistance}}}{\Longrightarrow}
 \text{finite-time extinction of the residual set}\\
 &\overset{\text{\Cref{thm:poisson-sufficiency}}}{\Longrightarrow}
 \text{Poisson tail covering}
 \overset{\text{\Cref{thm:sufficiency}}}{\Longrightarrow}
 \Pp(E_r=\T)=1.
\end{aligned}
\end{equation}

\section{Analytic localization}\label{sec:analytic-localization}

This section converts the intrinsic condition
$\int_{\T}\e^H\,\dd m=\infty$ into a discrete sum indexed by spatial
scales.  We first compare the exact overlap profile with two cones, then
localize all possible divergence at the origin, and finally group the radii
according to the smaller of their geometric and cumulative-intensity scales.

\begin{lemma}[Overlap profile]\label{lem:overlap-profile}
For $0\le t\le2r$, define
\[
 u_r(t):=\mathcal L^d(B_{\R^d}(0,r)\cap B_{\R^d}(te_1,r))
       =\kappa_dr^d\psi_d(t/r).
\]
For $0<r<1/4$ and $z\in\T$,
\[
 m(B(0,r)\cap B(z,r))
 =\begin{cases}
   u_r(|z|),&|z|\le2r,\\
   0,&|z|>2r.
  \end{cases}
\]
Moreover,
\begin{align}
 \psi_d(x)
 &=\frac{2\kappa_{d-1}}{\kappa_d}
   \int_{x/2}^{1}(1-s^2)^{(d-1)/2}\,\dd s,\label{eq:psi}\\
 \psi_d'(x)
 &=-\frac{\kappa_{d-1}}{\kappa_d}
   (1-x^2/4)^{(d-1)/2},\label{eq:psi-prime}\\
 \left(1-\frac{\kappa_{d-1}}{\kappa_d}x\right)_+
 &\le\psi_d(x)\le(1-x/2)_+.
 \label{eq:cone-bounds}
\end{align}
Here $\psi_d(x):=0$ for $x\ge2$.
\end{lemma}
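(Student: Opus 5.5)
The lemma has three parts: a torus-to-Euclidean reduction, a slice computation, and an elementary concavity estimate. I will take them in that order.

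\emph{Reduction to $\R^d$.} Since $r<1/4$, the quotient map $\pi:\R^d\to\T$ restricted to $B_{\R^d}(0,1/2)$ is an isometry onto its image for points at mutual distance $<1/2$. In particular, $B(x,r)=\pi(B_{\R^d}(\tilde x,r))$ for any lift $\tilde x$. Choose a lift $\tilde z$ of $z$ with $|\tilde z|=|z|\le\sqrt d/2$.

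If $|z|\le 2r<1/2$, both balls lie in $\pi(B_{\R^d}(0,3r))$, and $3r<3/4$. A more careful check is needed here: two points of $B_{\R^d}(0,r)\cup B_{\R^d}(\tilde z,r)$ differ by less than $4r<1$ in each coordinate. So two of them are identified under $\pi$ only if they differ by a nonzero integer vector $k$ with $|k|<4r<1$, which is impossible. Hence $\pi$ is injective on the union, and measure-preserving since $m$ is the pushforward of Lebesgue measure on a fundamental domain. The intersection on $\T$ is therefore the image of $B_{\R^d}(0,r)\cap B_{\R^d}(\tilde z,r)$, up to the possibility of other lifts $\tilde z+k$. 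But $B_{\R^d}(0,r)\cap B_{\R^d}(\tilde z+k,r)\neq\emptyset$ forces $|\tilde z+k|<2r$, and together with $|\tilde z|\le$ any other lift's norm this gives $k=0$ whenever a second lift would be within $2r<1/2$. Rotation invariance of Lebesgue measure then gives $u_r(|z|)$.

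If $|z|>2r$, every lift has norm $>2r$, so the Euclidean balls are disjoint for every lift, and the intersection on $\T$ is empty.

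\emph{Formula for $\psi_d$.} By scaling, $u_r(t)=r^d u_1(t/r)$, so it suffices to take $r=1$ and $0\le x\le 2$. The lens $B(0,1)\cap B(xe_1,1)$ is symmetric about the hyperplane $s=x/2$. Slicing perpendicular to $e_1$, the half with $s\ge x/2$ is exactly the cap $\{s\ge x/2\}$ of the unit ball. Its volume is $\int_{x/2}^1\kappa_{d-1}(1-s^2)^{(d-1)/2}\,\dd s$, so doubling and dividing by $\kappa_d$ gives \eqref{eq:psi}. Differentiating in $x$ by the fundamental theorem of calculus gives \eqref{eq:psi-prime}, the factor $1/2$ from the chain rule cancelling the $2$. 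As a check, $\psi_d(0)=1$, because $\int_{-1}^1\kappa_{d-1}(1-s^2)^{(d-1)/2}\,\dd s=\kappa_d$ by the same slicing.

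\emph{Cone bounds.} On $[0,2]$ we have $0\le(1-x^2/4)^{(d-1)/2}\le1$, so $\psi_d'\ge-\kappa_{d-1}/\kappa_d$. Integrating from $0$ with $\psi_d(0)=1$, and using $\psi_d\ge0$, gives the lower bound in \eqref{eq:cone-bounds}.

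For the upper bound, $|\psi_d'|$ is decreasing on $[0,2]$, so $\psi_d$ is convex there. Since $\psi_d(0)=1$ and $\psi_d(2)=0$, convexity gives $\psi_d(x)\le 1-x/2$ on $[0,2]$. Beyond $x=2$, both sides equal $0$ by the convention $\psi_d:=0$ for $x\ge2$.

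I expect the main obstacle to be the careful injectivity argument for $\pi$ in the torus reduction, in particular ruling out contributions from other lifts when $|z|\le2r$. This is exactly where $r<1/4$ is used.
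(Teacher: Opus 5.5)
Your proposal is correct and follows essentially the same route as the paper: you slice the lens perpendicular to $e_1$, rescale and differentiate to get \eqref{eq:psi} and \eqref{eq:psi-prime}, and use convexity of $\psi_d$ for the chord bound. Your derivative bound $\psi_d'\ge\psi_d'(0)$ is the paper's tangent-line bound in another form, and your injectivity argument for $r<1/4$ spells out the paper's one-line lifting remark. One small gap remains: for $x>2$ the lower bound in \eqref{eq:cone-bounds} needs $2\kappa_{d-1}\ge\kappa_d$, which you get by evaluating your inequality at $x=2$, where $\psi_d(2)=0$.
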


\begin{proof}
Slicing the Euclidean lens perpendicular to $e_1$ gives
\[
 u_r(t)
 =2\kappa_{d-1}\int_{t/2}^{r}
 (r^2-s^2)^{(d-1)/2}\,\dd s.
\]
The change of variables $s=ry$ proves \eqref{eq:psi}, and differentiation
proves \eqref{eq:psi-prime}.  In particular,
\[
 \psi_d(0)=1,\qquad \psi_d(2)=0,
 \qquad
 \psi_d''\ge0,\qquad
 \operatorname{Tan}_{0}\psi_d\le\psi_d
 \le\operatorname{Chord}_{(0,1),(2,0)}\psi_d.
\]
The tangent and chord are respectively
$1-(\kappa_{d-1}/\kappa_d)x$ and $1-x/2$; nonnegativity of $\psi_d$
then gives \eqref{eq:cone-bounds}.  If $r<1/4$, both balls lift uniquely to
Euclidean balls whenever they intersect, which proves the asserted torus
formula.
\end{proof}

Define
\begin{equation}\label{eq:G}
 G:(0,\infty)\to[0,\infty],\qquad
 G(t):=\sum_{n\ge1}v_n(1-t/r_n)_+.
\end{equation}
Consequently, there exists $t_{\mathrm{loc}}>0$ such that
\[
 0<|z|\le t_{\mathrm{loc}}
 \Longrightarrow
 G\!\left(\frac{\kappa_{d-1}}{\kappa_d}|z|\right)
 \le H(z)\le G(|z|/2).
\]
For every $c>0$,
\[
 \int_0^\varepsilon\e^{G(ct)}t^{d-1}\,\dd t
 =c^{-d}\int_0^{c\varepsilon}\e^{G(s)}s^{d-1}\,\dd s.
\]
For each $t>0$, only finitely many summands in $G(t)$ are nonzero because
$r_n\downarrow0$.  Hence changing a positive upper integration limit cannot
create or remove divergence; any divergence occurs at $t=0$.
Fix $t_0\in(0,\min\{t_{\mathrm{loc}},1/4\}]$.  Choose $N_0$ so that
$2r_n<t_0$ for $n>N_0$.  If $|z|\ge t_0$, then $u_n(z)=0$ for
$n>N_0$, and hence
\[
 \sup_{\{|z|\ge t_0\}}H(z)
 \le\sum_{n\le N_0}v_n<\infty.
\]
On $B(0,t_0)$, Haar measure agrees with Euclidean volume, so radial
integration gives
\[
 \int_{B(0,t_0)}f(|z|)\,\dd m(z)
 =d\kappa_d\int_0^{t_0}f(t)t^{d-1}\,\dd t
\]
for every nonnegative Borel $f$.  The two cone comparisons and the preceding
change of variables therefore show that
\begin{equation}\label{eq:cone-equivalence}
 \int_{\T}\e^H\,\dd m=\infty
 \quad\Longleftrightarrow\quad
 \int_0^{t_0}\e^{G(t)}t^{d-1}\,\dd t=\infty.
\end{equation}

For $r_{N+1}<t<r_N$,
\[
 G(t)=\sum_{n\le N}v_n(1-t/r_n)=A_N-S_Nt.
\]
Consequently,
\begin{equation}\label{eq:explicit-series}
 \int_{\T}\e^H\,\dd m=\infty
 \Longleftrightarrow
 \sum_{N\ge1}\e^{A_N}
 \int_{r_{N+1}}^{r_N}\e^{-S_Nt}t^{d-1}\,\dd t=\infty.
\end{equation}
Indeed, \eqref{eq:G} equals $A_N-S_Nt$ on
$(r_{N+1},r_N)$; the finitely many intervals not contained in
$(0,t_0)$ have finite total contribution and do not affect divergence.

Put
\[
 \sigma_N:=\min\{r_N,d/S_N\},\qquad \ell_k:=2^{-k},
\]
with $d/0:=\infty$.  Choose $k_*$ by
\[
 \ell_{k_*}\le\sigma_1<2\ell_{k_*},
\]
and set
\begin{equation}\label{eq:packets}
 \begin{aligned}
  \whatN_{k_*-1}&:=0,
  &\whatA_{k_*-1}&:=A_0=0,\\
 \whatN_k&:=\max\{N:\sigma_N\ge\ell_k\},
 &\whatA_k&:=A_{\whatN_k},
 \qquad k\ge k_*.
\end{aligned}
\end{equation}
Thus $\sigma_N$ is the effective spatial scale after the first $N$ radius
types, $\whatN_k$ is the last type visible at scale $\ell_k$, and $\whatA_k$
is the cumulative volume mass through that type.
Then
\begin{equation}\label{eq:packet-interface}
 \whatN_k\uparrow\infty,\qquad
 \ell_kS_{\whatN_k}\le d,\qquad
 n\le\whatN_k\Longrightarrow r_n\ge\ell_k,\qquad
 \whatN_{k-1}<n\le\whatN_k\Longrightarrow\sigma_n<2\ell_k.
\end{equation}
To verify these properties, note first that $(\sigma_N)_N$ is nonincreasing
and tends to zero: both $r_N$ and $d/S_N$ are nonincreasing, and
$r_N\downarrow0$.  Thus the maximum in \eqref{eq:packets} exists and is
finite, while $\widehat N_k\uparrow\infty$ because $\ell_k\downarrow0$.
If $N\le\widehat N_k$, then
$\sigma_N\ge\ell_k$; since
$\sigma_N=\min\{r_N,d/S_N\}$, this implies both
$r_N\ge\ell_k$ and $\ell_kS_N\le d$.  For the last assertion, if
$k=k_*$, then $\sigma_n\le\sigma_1<2\ell_{k_*}$; if $k>k_*$, then
$n>\widehat N_{k-1}$ and the definition gives
$\sigma_n<\ell_{k-1}=2\ell_k$.

\begin{lemma}[Packet domination]\label{lem:packet}
For $t>0$,
\begin{equation}\label{eq:packet-domination}
 \e^{G(t)}
 \le1+\e^d\sum_{k\ge k_*}
 (\e^{\whatA_k}-\e^{\whatA_{k-1}})
 \exp\!\left(-\frac{dt}{2\ell_k}\right).
\end{equation}
\end{lemma}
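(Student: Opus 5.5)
The proof will exploit the fact that $G$ is piecewise affine in $t$, and will compare it on each piece with a single packet $k$. Fix $t>0$. Since $r_n$ is nonincreasing, the summands $v_n(1-t/r_n)_+$ are positive exactly for the $n$ with $r_n>t$, and these form an initial segment. Let $N:=\max\{n:r_n>t\}$, with $N=0$ if no such $n$ exists. If $N=0$ then $G(t)=0$ and \eqref{eq:packet-domination} holds trivially, because every increment $\e^{\whatA_k}-\e^{\whatA_{k-1}}$ is nonnegative ($\whatN_k$ and hence $\whatA_k$ are nondecreasing). So assume $N\ge1$. Then
\[
 G(t)=A_N-S_Nt\ge0 .
\]
By \eqref{eq:packet-interface}, $\whatN_{k_*-1}=0$ and $\whatN_k\uparrow\infty$, so there is a unique $k\ge k_*$ with $\whatN_{k-1}<N\le\whatN_k$. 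For this $k$ we have $A_N\le\whatA_k$, and the last property in \eqref{eq:packet-interface} gives $\sigma_N<2\ell_k$.

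The key step is to show
\[
 \e^{G(t)}-1\le \e^d(\e^{\whatA_k}-1)\exp\!\left(-\frac{dt}{2\ell_k}\right),
\]
by splitting according to which term realizes $\sigma_N=\min\{r_N,d/S_N\}$.

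\emph{Case $\sigma_N=d/S_N$.} Then $S_N>d/(2\ell_k)$. With $a:=\whatA_k$ and $s:=dt/(2\ell_k)$ this gives $G(t)\le a-s$. Since $e^{-s}\le1$,
\[
 \e^{G(t)}-1\le \e^{a-s}-\e^{-s}=(\e^a-1)\e^{-s},
\]
which is even better than required, since the factor $\e^d$ is not needed.

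\emph{Case $\sigma_N=r_N$.} Then $t<r_N<2\ell_k$, so $dt/(2\ell_k)<d$, that is, $\e^d\exp(-dt/(2\ell_k))\ge1$. Together with $G(t)\le A_N\le a$ this gives
\[
 \e^{G(t)}-1\le \e^{a}-1\le \e^d(\e^a-1)\exp\!\left(-\frac{dt}{2\ell_k}\right).
\]

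Finally, telescope: $\e^{\whatA_k}-1=\sum_{j=k_*}^{k}(\e^{\whatA_j}-\e^{\whatA_{j-1}})$, using $\whatA_{k_*-1}=0$. For $j\le k$ we have $\ell_j\ge\ell_k$, hence $\exp(-dt/(2\ell_k))\le\exp(-dt/(2\ell_j))$. Since all increments are nonnegative, enlarging the sum to all $j\ge k_*$ yields \eqref{eq:packet-domination}.

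The only delicate point is handling the constant term correctly. Bounding $\e^{G}$ directly by $\e^{\whatA_k}\e^{d}\e^{-dt/2\ell_k}$ would leave a stray $\e^{d}\e^{-dt/2\ell_k}$ that is not dominated by $1$. This is avoided by working with $\e^{G}-1$ and using the elementary inequality $\e^{a-s}-1\le(\e^a-1)\e^{-s}$.
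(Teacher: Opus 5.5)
Your proof is correct and is essentially the paper's argument. Both rest on $G(t)=A_{N(t)}-tS_{N(t)}$, the elementary bound $\e^{a-s}\le1+\e^{-s}(\e^{a}-1)$, and the estimate $dt/\sigma_n\le d+tS_{N(t)}$; your two cases are exactly the two halves of the minimum defining $\sigma_n$. The only difference is bookkeeping: the paper applies that estimate to every $n\le N(t)$, obtains $\e^{G(t)}\le1+\e^d\sum_n(\e^{A_n}-\e^{A_{n-1}})\e^{-dt/\sigma_n}$, and then groups indices into packets via $\sigma_n<2\ell_k$, whereas you use the estimate only at $n=N(t)$ and reach the earlier packets through $\ell_j\ge\ell_k$.
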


\begin{proof}
Put
\[
 N(t):=\max\bigl(\{n:r_n>t\}\cup\{0\}\bigr).
\]
Then
\[
 G(t)=A_{N(t)}-tS_{N(t)},\qquad
 \frac{dt}{\sigma_n}\le d+tS_{N(t)}\quad(n\le N(t)).
\]
Thus
\begin{align*}
 1+\e^d\sum_{n\ge1}(\e^{A_n}-\e^{A_{n-1}})\e^{-dt/\sigma_n}
 &\ge1+\e^{-tS_{N(t)}}(\e^{A_{N(t)}}-1)\\
 &\ge\e^{A_{N(t)}-tS_{N(t)}}=\e^{G(t)}.
\end{align*}
Finally,
\[
 \e^{-dt/\sigma_n}\le\e^{-dt/(2\ell_k)},\qquad
 \sum_{\whatN_{k-1}<n\le\whatN_k}(\e^{A_n}-\e^{A_{n-1}})
 =\e^{\whatA_k}-\e^{\whatA_{k-1}}.
\]
\end{proof}

\begin{lemma}[Divergence of the scale sum]\label{lem:analytic-mass}
\[
 \int_{\T}\e^H\,\dd m=\infty
 \Longrightarrow
 \sum_{k\ge k_*}\ell_k^d
 (\e^{\whatA_k}-\e^{\whatA_{k-1}})=\infty.
\]
\end{lemma}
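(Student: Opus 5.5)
The plan is to combine the localization \eqref{eq:cone-equivalence} with the packet domination of \Cref{lem:packet}, and then integrate term by term.

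By \eqref{eq:cone-equivalence}, the hypothesis $\int_{\T}\e^H\,\dd m=\infty$ gives
\[
 \int_0^{t_0}\e^{G(t)}t^{d-1}\,\dd t=\infty .
\]
I will insert the pointwise bound \eqref{eq:packet-domination} into this integral. The constant term $1$ contributes at most $t_0^d/d<\infty$. Every summand in \eqref{eq:packet-domination} is nonnegative, since $\whatA_k\ge\whatA_{k-1}$ because $A_N$ is nondecreasing and $\whatN_k$ is nondecreasing. Tonelli's theorem therefore permits exchanging sum and integral, and yields
\[
 \infty=\int_0^{t_0}\e^{G(t)}t^{d-1}\,\dd t
 \le \frac{t_0^d}{d}+\e^d\sum_{k\ge k_*}\bigl(\e^{\whatA_k}-\e^{\whatA_{k-1}}\bigr)
 \int_0^{\infty}\exp\!\Bigl(-\frac{dt}{2\ell_k}\Bigr)t^{d-1}\,\dd t .
\]

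The remaining integral is an explicit Gamma integral. The substitution $s=dt/(2\ell_k)$ gives
\[
 \int_0^\infty \e^{-dt/(2\ell_k)}t^{d-1}\,\dd t=\Bigl(\frac{2\ell_k}{d}\Bigr)^{d}\Gamma(d)=C_d\,\ell_k^d .
\]
Hence $\infty\le C+C_d\sum_{k\ge k_*}\ell_k^d(\e^{\whatA_k}-\e^{\whatA_{k-1}})$, and so the scale sum diverges, which is the claim.

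There is no serious obstacle here, because \Cref{lem:packet} already carries the analytic content. The only points to check are that the terms are nonnegative, so Tonelli applies, and that extending the $t$-integral from $(0,t_0)$ to $(0,\infty)$ only enlarges the right-hand side. Both hold because the integrands are nonnegative.
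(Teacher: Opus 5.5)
Your proposal is correct and follows the paper's proof essentially line by line: you insert the packet domination bound into the localized integral from the cone equivalence, bound the constant term by $t_0^d/d$, extend each exponential integral to $(0,\infty)$, and evaluate it as $C_d\ell_k^d$. Your remarks on nonnegativity of the summands (which justifies Tonelli) and the explicit constant $(2/d)^d\Gamma(d)$ only spell out what the paper leaves implicit.
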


\begin{proof}
By \eqref{eq:cone-equivalence} and \eqref{eq:packet-domination},
\begin{align*}
 \infty
 &=\int_0^{t_0}\e^{G(t)}t^{d-1}\,\dd t\\
 &\le \frac{t_0^d}{d}
 +\e^d\sum_k(\e^{\whatA_k}-\e^{\whatA_{k-1}})
 \int_0^\infty\e^{-dt/(2\ell_k)}t^{d-1}\,\dd t\\
 &=\frac{t_0^d}{d}+C_d\sum_k\ell_k^d
 (\e^{\whatA_k}-\e^{\whatA_{k-1}}).
\end{align*}
\end{proof}

\section{Poisson decomposition and dyadic refinement}
\label{sec:poisson-decomposition}

We now construct the geometric population to which the extinction criterion
will be applied.  At scale $\ell_k$, an individual is a dyadic cell together
with the portion of the uncovered set lying in that cell.  The radii assigned
to this scale are revealed in independent Poisson blocks of total volume
intensity at most one; after the last block, every surviving cell is refined
into its $2^d$ children.
Throughout \Cref{sec:poisson-decomposition,sec:spatial-application}, $n$
indexes radius types, $k$ spatial scales, $p$ Poisson blocks, $\alpha$ dyadic
cells, and $\varepsilon$ children; \Cref{sec:spatial-application} embeds
$(k,p)$ into the abstract time $j$.

Choose $\eta\in\{2^{-s}:s\in\mathbb N_0\}$ so that
\[
 h_k:=\eta\ell_k,\qquad
 \rho_k:=\frac{\sqrt d}{2}h_k\le\frac14\ell_k.
\]
Define the label set and cell map by
\begin{align}
 \mathcal I_k
 &:=\{0,1,\ldots,h_k^{-1}-1\}^d,\label{eq:label-set}\\
 Q_k:\mathcal I_k&\longrightarrow\mathcal P(\T),\qquad
 \alpha\longmapsto Q_{k,\alpha}\notag\\
 Q_{k,\alpha}
 &:=\{x+\mathbb Z^d:x\in\R^d,\ 
       \alpha_ih_k\le x_i\le(\alpha_i+1)h_k,\ 1\le i\le d\}.
 \label{eq:cell-map}
\end{align}
For later use, set
\[
 c_{k,\alpha}
 :=\bigl((\alpha_i+\tfrac12)h_k\bigr)_{i=1}^d+\mathbb Z^d
 \in Q_{k,\alpha}.
\]
Thus
\[
 |\mathcal I_k|=h_k^{-d}=\eta^{-d}\ell_k^{-d}.
\]
Since $h_{k+1}=h_k/2$, define
\begin{equation}\label{eq:child-map}
 \operatorname{ch}_k:
 \mathcal I_k\times\{0,1\}^d\longrightarrow\mathcal I_{k+1},
 \qquad
 \operatorname{ch}_k(\alpha,\varepsilon)
 :=(2\alpha_i+\varepsilon_i)_{i=1}^d.
\end{equation}
Then
\begin{equation}\label{eq:child-properties}
 Q_{k,\alpha}
 =\bigcup_{\varepsilon\in\{0,1\}^d}
 Q_{k+1,\operatorname{ch}_k(\alpha,\varepsilon)},
 \qquad
 Q_{k+1,\operatorname{ch}_k(\alpha,\varepsilon)}
 \subseteq Q_{k,\alpha}.
\end{equation}

Put
\[
 \mathcal N_k:=\{n:\whatN_{k-1}<n\le\whatN_k\},\qquad
 \Delta A_k:=\whatA_k-\whatA_{k-1},\qquad
 M_k:=\lceil\Delta A_k\rceil.
\]
The intervals $(\mathcal L_{k,n})_{n\in\mathcal N_k}$ concatenate the
individual masses $v_n$ into $[0,\Delta A_k)$, while
$(\mathcal B_{k,p})_{1\le p\le M_k}$ cut that interval into pieces of length
at most one.  Their intersections specify what fraction of the Poisson
intensity of radius type $n$ is assigned to block $p$.
For $n\in\mathcal N_k$, $1\le p\le M_k$, set
\[
\begin{array}{@{}c@{\qquad}c@{}}
\displaystyle
 \mathcal L_{k,n}
 :=\left[\sum_{\substack{m\in\mathcal N_k\\m<n}}v_m,
          \sum_{\substack{m\in\mathcal N_k\\m\le n}}v_m\right)
&
\displaystyle
 \mathcal B_{k,p}
 :=\bigl[\min\{p-1,\Delta A_k\},\min\{p,\Delta A_k\}\bigr)
\\[12pt]
\displaystyle
 \theta_{n,p}
 :=\frac{\mathcal L^1(\mathcal L_{k,n}\cap\mathcal B_{k,p})}{v_n}
&
\displaystyle
 b_{k,p}
 :=\sum_{n\in\mathcal N_k}\theta_{n,p}v_n
  =\mathcal L^1(\mathcal B_{k,p}).
\end{array}
\]
Hence
\begin{equation}\label{eq:block-identities}
 0<b_{k,p}\le1,\qquad
 \sum_{p=1}^{M_k}\theta_{n,p}=1\quad(n\in\mathcal N_k),\qquad
 \sum_{p=1}^{M_k}b_{k,p}=\Delta A_k.
\end{equation}
Indeed, the $\mathcal B_{k,p}$ partition $[0,\Delta A_k)$ and the
$\mathcal L_{k,n}$ do the same.  Summing intersection lengths first in $p$
or first in $n$ gives the two identities in \eqref{eq:block-identities}; the
first follows from the definition of $M_k$.

For every standard Borel space $E$, put
\[
 \Cloud(E):=\mathbb N_0\times E^{\mathbb N},
 \qquad
 \omega=(q,\mathbf x),
 \qquad
 [\omega]:=\{\mathbf x(i):0\le i<q\}.
\]
Only the active prefix $0\le i<q$ is used.  The inactive tail is a common
i.i.d. coordinate reservoir for measurable coupling and superposition and has
no geometric role.  If $F$ is finite, set
\[
 \Cloud_F(E):=\prod_{a\in F}\Cloud(E),
 \qquad
 \omega_a=(q_a,\mathbf x_a).
\]
For a metric space $(E,d_E)$ and $\rho:F\to(0,\infty)$, define
\begin{equation}\label{eq:enumerated-cover}
 \Cov_{E,\rho}:\Cloud_F(E)\longrightarrow\mathcal P(E),
 \qquad
 \Cov_{E,\rho}(\omega)
 :=\bigcup_{a\in F}\ \bigcup_{0\le i<q_a}
 B_E(\mathbf x_a(i),\rho(a)).
\end{equation}
For a single cloud and radius $s>0$, write
$\Cov_{E,s}(q,\mathbf x):=\bigcup_{0\le i<q}B_E(\mathbf x(i),s)$.
Finite clouds are concatenated by
\[
 (q,\mathbf x)\oplus(q',\mathbf x')
 :=(q+q',\mathbf z),
 \qquad
 \mathbf z(i):=
 \begin{cases}
  \mathbf x(i),&i<q,\\
  \mathbf x'(i-q),&i\ge q.
 \end{cases}
\]

On a product probability space, take mutually independent
\begin{equation}\label{eq:fractional-subprocess}
 \Pi_{k,p}^{(n)}
 =\bigl(P_{k,p}^{(n)},X_{k,p}^{(n)}\bigr)\in\Cloud(\T),
 \quad
 \substack{k\ge k_*,\ n\in\mathcal N_k,\\1\le p\le M_k},
\end{equation}
with
\[
 P_{k,p}^{(n)}\sim\Pois(\theta_{n,p}),
 \qquad
 \bigl(X_{k,p}^{(n)}(i)\bigr)_{i\ge0}
 \stackrel{\mathrm{iid}}{\sim}m,
 \qquad
 P_{k,p}^{(n)}\perp\!\!\!\perp X_{k,p}^{(n)}.
\]
Define
\begin{equation}\label{eq:block-and-full-processes}
 \boldsymbol\Pi_{k,p}
 :=\bigl(\Pi_{k,p}^{(n)}\bigr)_{n\in\mathcal N_k}
 \in\Cloud_{\mathcal N_k}(\T),
 \qquad
 \Pi_n^{\mathrm{full}}
 :=\bigoplus_{p=1}^{M_k}\Pi_{k,p}^{(n)}
 \quad(n\in\mathcal N_k).
\end{equation}
Poisson superposition yields
\begin{equation}\label{eq:poisson-superposition}
 \begin{aligned}
 &(\boldsymbol\Pi_{k,p})_{\substack{k\ge k_*\\1\le p\le M_k}}
   \text{ are mutually independent},\\
 &(\Pi_n^{\mathrm{full}})_1\sim\Pois(1),
 \quad
 (\Pi_n^{\mathrm{full}})_1
 \perp\!\!\!\perp
 \bigl((\Pi_n^{\mathrm{full}})_2(i)\bigr)_{i\ge0},\\
 &
 \bigl((\Pi_n^{\mathrm{full}})_2(i)\bigr)_{i\ge0}
 \stackrel{\mathrm{iid}}{\sim}m,
 \quad
 (\Pi_n^{\mathrm{full}})_{n\ge1}\text{ are independent},\\
 &\sum_{n\in\mathcal N_k}\theta_{n,p}v_n=b_{k,p}
   \quad(1\le p\le M_k).
 \end{aligned}
\end{equation}
For the rest of the proof, abbreviate
\[
 \Cov(\boldsymbol\Pi_{k,p})
 :=\Cov_{\T,r|_{\mathcal N_k}}(\boldsymbol\Pi_{k,p}),
 \qquad
 \Cov(\Pi_n^{\mathrm{full}})
 :=\Cov_{\T,r_n}(\Pi_n^{\mathrm{full}}).
\]
For $k\ge k_*$ and $0\le p\le M_k$, define
\begin{align}
 a_{k,p}
 &:=\whatA_{k-1}+\sum_{q=1}^{p}b_{k,q}
    =\whatA_{k-1}+\min\{p,\Delta A_k\},\label{eq:akp}\\
 \mathfrak R_{k,p}
 &:=\T\setminus\left(
    \bigcup_{n=1}^{\whatN_{k-1}}\Cov(\Pi_n^{\mathrm{full}})
    \cup\bigcup_{q=1}^{p}\Cov(\boldsymbol\Pi_{k,q})\right),\label{eq:Rkp}\\
 m_{k,p}
 &:=\Ee\#\{\alpha\in\mathcal I_k:
       \mathfrak R_{k,p}\cap Q_{k,\alpha}\ne\varnothing\}.
 \label{eq:mean-count}
\end{align}
where $\sum_{q=1}^{0}b_{k,q}:=0$ and
$\bigcup_{q=1}^{0}\Cov(\boldsymbol\Pi_{k,q}):=\varnothing$.  In particular,
\[
 \mathfrak R_{k,p}\subseteq\T\text{ is compact},\qquad
 0\le m_{k,p}\le|\mathcal I_k|.
\]

\begin{lemma}[One-cell estimates]\label{lem:one-cell}
For $1\le p\le M_k$,
\begin{align}
 m_{k,p-1}
 &\le C_d\ell_k^{-d}\e^{-a_{k,p-1}},\label{eq:mean-bound}\\
 \inf_{\substack{\alpha\in\mathcal I_k\\
                  \varnothing\ne A\subseteq Q_{k,\alpha}\text{ compact}}}
 \Pp(A\setminus\Cov(\boldsymbol\Pi_{k,p})=\varnothing)
 &\ge1-\e^{-q_db_{k,p}}
 \ge(1-\e^{-q_d})b_{k,p},
 \qquad q_d:=(3/4)^d.
 \label{eq:killing-bound}
\end{align}
\end{lemma}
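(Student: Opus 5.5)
Both estimates come from one observation. Every cell $Q_{k,\alpha}$ lies in the closed ball of radius $\rho_k$ about its centre $c_{k,\alpha}$. Hence an open ball $B(x,r)$ with $r>\rho_k$ covers the whole cell as soon as $|x-c_{k,\alpha}|<r-\rho_k$. So I will replace the event "the cell is covered" by the smaller event "some centre falls in the shrunken ball $B(c_{k,\alpha},r_n-\rho_k)$". The probability of the smaller event is explicit by Poisson thinning. Since every radius involved satisfies $r_n\ge\ell_k\ge4\rho_k$ and $r_n<1/4$, the shrunken balls are genuine Euclidean balls of volume $\kappa_d(r_n-\rho_k)^d$.

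\emph{Mean bound \eqref{eq:mean-bound}.} I write $m_{k,p-1}=\sum_\alpha\Pp(\mathfrak R_{k,p-1}\cap Q_{k,\alpha}\ne\varnothing)$ and bound each term by the probability that no centre of the relevant clouds lies in the corresponding shrunken ball about $c_{k,\alpha}$. The clouds are $\Pi_n^{\mathrm{full}}$ for $n\le\whatN_{k-1}$ and $\Pi^{(n)}_{k,q}$ for $q\le p-1$. By \eqref{eq:poisson-superposition}, their counts are independent Poisson variables with uniform i.i.d. marks. The number of points of $\Pi^{(n)}_{k,q}$ landing in $B(c,r_n-\rho_k)$ is therefore $\Pois(\theta_{n,q}\kappa_d(r_n-\rho_k)^d)$, and these counts are independent. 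The probability that all are zero is $\exp(-\Lambda)$, where
\[
 \Lambda=\sum_{n\le\whatN_{k-1}}\kappa_d(r_n-\rho_k)^d+\sum_{q<p}\sum_{n\in\mathcal N_k}\theta_{n,q}\kappa_d(r_n-\rho_k)^d .
\]
Bernoulli's inequality gives $(1-\rho_k/r_n)^d\ge1-d\rho_k/r_n$, hence $\kappa_d(r_n-\rho_k)^d\ge v_n-d\rho_k v_n/r_n$. Summing and using $\theta_{n,q}\le1$ and \eqref{eq:akp}, I get
\[
 \Lambda\ge a_{k,p-1}-d\rho_kS_{\whatN_k}.
\]
By \eqref{eq:packet-interface}, $\ell_kS_{\whatN_k}\le d$, and $\rho_k\le\ell_k/4$. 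So the error term is at most $d^2/4$, and each cell contributes at most $\e^{d^2/4}\e^{-a_{k,p-1}}$. Multiplying by $|\mathcal I_k|=\eta^{-d}\ell_k^{-d}$ gives \eqref{eq:mean-bound}. Here $\eta$ is a fixed power of $2$ depending only on $d$, so $\eta^{-d}\e^{d^2/4}$ is a constant $C_d$.

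\emph{Killing bound \eqref{eq:killing-bound}.} Let $A\subseteq Q_{k,\alpha}$ be nonempty and compact. Then $A\setminus\Cov(\boldsymbol\Pi_{k,p})=\varnothing$ whenever some point of some $\Pi^{(n)}_{k,p}$, $n\in\mathcal N_k$, lies in $B(c_{k,\alpha},r_n-\rho_k)$. The argument above shows this has probability
\[
 1-\exp\Bigl(-\sum_{n\in\mathcal N_k}\theta_{n,p}\kappa_d(r_n-\rho_k)^d\Bigr),
\]
uniformly in $\alpha$ and $A$. Because $r_n\ge\ell_k\ge4\rho_k$ for $n\in\mathcal N_k$ by \eqref{eq:packet-interface}, I have $r_n-\rho_k\ge\tfrac34r_n$. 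Hence the exponent is at least $q_d\sum_n\theta_{n,p}v_n=q_db_{k,p}$, which is the first inequality. The second follows from concavity of $b\mapsto1-\e^{-q_db}$ on $[0,1]$, since $0<b_{k,p}\le1$ by \eqref{eq:block-identities}: the function lies above its chord from $(0,0)$ to $(1,1-\e^{-q_d})$.

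The only step needing care is keeping the constant in the exponent of \eqref{eq:mean-bound} additive rather than multiplicative. A crude bound $(1-\rho_k/r_n)^d\ge(3/4)^d$ would give $\e^{-q_da_{k,p-1}}$, which is useless. The additive loss is controlled precisely by the packet property $\ell_kS_{\whatN_k}\le d$, and this is where the choice $\sigma_N=\min\{r_N,d/S_N\}$ enters. The remaining points are routine: the torus formula for $r_n-\rho_k<1/4$, and the independence and thinning of the Poisson marks from \eqref{eq:poisson-superposition}.
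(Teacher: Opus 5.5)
Your proof is correct and is essentially the paper's argument: shrunken concentric balls $B(c_{k,\alpha},r_n-\rho_k)$, Poisson void probabilities, and an additive loss controlled by $\ell_kS_{\whatN_k}\le d$, with your Bernoulli inequality playing the role of the paper's mean-value estimate; the killing bound likewise comes from $r_n-\rho_k\ge\tfrac34r_n$ followed by concavity. One small correction: bounding the error term by $d\rho_kS_{\whatN_k}$ needs $\sum_{q<p}\theta_{n,q}\le1$ for $n\in\mathcal N_k$, which follows from $\sum_{q=1}^{M_k}\theta_{n,q}=1$ in \eqref{eq:block-identities}, whereas $\theta_{n,q}\le1$ alone would leave an extra factor $p-1$.
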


\begin{proof}
For the cell center $c_{k,\alpha}$ introduced immediately after
\eqref{eq:cell-map}, put
\[
 \Theta_n^{k,p-1}:=
 \begin{cases}
  1,&n\le\whatN_{k-1},\\
  \sum_{q=1}^{p-1}\theta_{n,q},&n\in\mathcal N_k,\\
  0,&n>\whatN_k.
 \end{cases}
 \qquad
 \sum_n\Theta_n^{k,p-1}v_n=a_{k,p-1}.
\]
Every point of $Q_{k,\alpha}$ lies within distance $\rho_k$ of
$c_{k,\alpha}$.  Hence, for every $n\le\widehat N_k$,
\[
 x\in B(c_{k,\alpha},r_n-\rho_k)
 \Longrightarrow Q_{k,\alpha}\subseteq B(x,r_n),
 \qquad r_n\ge\ell_k.
\]
If the residual set meets $Q_{k,\alpha}$, none of the previously revealed
balls can have its center in one of these smaller concentric balls.  The
relevant counts are independent Poisson variables, and therefore
\begin{align*}
 \Pp(\mathfrak R_{k,p-1}\cap Q_{k,\alpha}\ne\varnothing)
 &\le\exp\!\left[-\sum_n\Theta_n^{k,p-1}
          \kappa_d(r_n-\rho_k)^d\right],\\
 0\le v_n-\kappa_d(r_n-\rho_k)^d
 &\le C_d\rho_k\frac{v_n}{r_n},\\
 \sum_n\Theta_n^{k,p-1}\kappa_d(r_n-\rho_k)^d
 &\ge a_{k,p-1}
   -C_d\ell_k\sum_{n\le\whatN_k}\frac{v_n}{r_n}
 \ge a_{k,p-1}-C_d.
\end{align*}
The second line is the mean-value estimate
$r_n^d-(r_n-\rho_k)^d\le C_d\rho_kr_n^{d-1}$; the last line uses
$\rho_k\le\ell_k/4$ and
$\ell_kS_{\widehat N_k}\le d$ from
\eqref{eq:packet-interface}.
Summation over $|\mathcal I_k|=\eta^{-d}\ell_k^{-d}$ labels gives
\eqref{eq:mean-bound}.

For the new block, the total number of balls that contain the whole cell is
Poisson with mean
\[
 \sum_{n\in\mathcal N_k}\theta_{n,p}
 \kappa_d(r_n-\rho_k)^d.
\]
Moreover,
\[
 \kappa_d(r_n-\rho_k)^d\ge(3/4)^d\kappa_dr_n^d=q_dv_n,
\]
so this mean is at least $q_db_{k,p}$.  The presence of one such ball deletes
every nonempty compact $A\subseteq Q_{k,\alpha}$.  Consequently the deletion
probability is at least $1-\e^{-q_db_{k,p}}$.  Finally, concavity on
$[0,1]$ gives
\[
 1-\e^{-q_db}\ge(1-\e^{-q_d})b,\qquad0\le b\le1.\qedhere
\]
\end{proof}

\section{An extinction criterion for positively associated population recursions}
\label{sec:extinction}

The population considered here is not a branching process: at time $j$, the
same random input $W_j$ acts on every current particle.  Conditional
independence between different families is therefore unavailable.  The
replacement is an order structure in which a larger input is more destructive
and the law of $W_j$ is positively associated.  A backward one-particle
survival function will then dominate the survival probability of the full
population.

Classical survival bounds for branching processes in varying environments
use independent reproduction between individuals
\cite{Agresti1975,Kersting2020}.  The same conditional independence remains
present when the environment is positively correlated across generations
\cite{ChenGuillotinPlantard2019}.  The recursion below replaces that branching
property by destructive monotonicity and positive association of the shared
innovation.

For \(j\ge0\), let \((\mathcal S_j^\dagger,\preceq_j)\) be a standard Borel
partially ordered space with
\[
 \mathcal S_j^\dagger=\mathcal S_j\sqcup\{\dagger\},
 \qquad
 \dagger\preceq_jx\quad(x\in\mathcal S_j^\dagger).
\]
Let \(\mathcal C_j\) be a finite set equipped with a fixed total order, let
\((\mathcal W_j,\le_j,\nu_j)\) be a standard Borel preordered probability
space, and let
\begin{equation}\label{eq:transition-realization}
 \mathcal T_j:
 \mathcal S_j^\dagger\times\mathcal W_j
 \longrightarrow
 (\mathcal S_{j+1}^\dagger)^{\mathcal C_j},
 \qquad \mathcal T_j\ \text{Borel}.
\end{equation}
Assume
\begin{align}
 \mathcal T_{j,c}(\dagger,w)&=\dagger,\notag\\
 x\preceq_jx'
 &\Longrightarrow
 \mathcal T_{j,c}(x,w)
 \preceq_{j+1}
 \mathcal T_{j,c}(x',w),\notag\\
 w\le_jw'
 &\Longrightarrow
 \mathcal T_{j,c}(x,w')
 \preceq_{j+1}
 \mathcal T_{j,c}(x,w),
 \qquad c\in\mathcal C_j.
 \label{eq:monotonicities}
\end{align}
and
\begin{equation}\label{eq:association-definition}
 \int\prod_{\ell=1}^qF_\ell\,\dd\nu_j
 \ge
 \prod_{\ell=1}^q\int F_\ell\,\dd\nu_j
 \quad
 \substack{q\ge1;\\
 F_\ell:\mathcal W_j\to[0,\infty)
 \text{ bounded, Borel, increasing}.}
\end{equation}

For $q\in\mathbb N_0$, put $[q]:=\{0,\ldots,q-1\}$ and
\[
 \Pop(E):=\bigsqcup_{q\in\mathbb N_0}E^{[q]}.
\]
For $\xi=(q,\mathbf x)\in\Pop(E)$, write $|\xi|:=q$ and
put $0:=(0,\varnothing)$.  Whenever $G$ is defined on $E$, families and
sums indexed by $\xi$ are interpreted entrywise:
\[
 \bigl(G(x)\bigr)_{x\in\xi}
 :=\bigl(G(\mathbf x_r)\bigr)_{r\in[q]},
 \qquad
 \sum_{x\in\xi}G(x)
 :=\sum_{r\in[q]}G(\mathbf x_r).
\]
The order is inherited from $[q]$, and repeated entries remain separately
indexed.
For a finite tuple in $E^\dagger$, let $\operatorname{live}$ delete all
$\dagger$-coordinates while preserving their order.

On a filtered probability space, take
\[
 \Xi_0\in\Pop(\mathcal S_0),\qquad
 \Xi_0\ \mathcal F_0\text{-measurable},\qquad
 \Ee|\Xi_0|<\infty,\qquad
 W_j\sim\nu_j,\qquad
 W_j\perp\!\!\!\perp\mathcal F_j,
\]
\[
 \mathcal F_{j+1}
 :=
 \overline{\sigma(\mathcal F_j,W_j)}^{\Pp}.
\]
If $\Xi_j=(q_j,\mathbf X_j)$, define
\begin{equation}\label{eq:shared-update}
 \Xi_{j+1}
 :=
 \operatorname{live}\!\left(
 \bigl(
 \mathcal T_{j,c}(\mathbf X_{j,r},W_j)
 \bigr)_{(r,c)\in[q_j]\times\mathcal C_j}
 \right).
\end{equation}
Here \([q_j]\times\mathcal C_j\) has the lexicographic order with the parent
index \(r\) first and the slot index \(c\) second.
Since \(|\mathcal C_j|<\infty\), induction gives
\begin{equation}\label{eq:actual-mean}
 m_j:=\Ee|\Xi_j|<\infty.
\end{equation}
Assume \(0\le\delta_j\le1\) and
\begin{equation}\label{eq:abstract-killing}
 \inf_{x\in\mathcal S_j}
 \nu_j\bigl\{w\in\mathcal W_j:
 \mathcal T_{j,c}(x,w)=\dagger\ \text{for every }c\in\mathcal C_j\bigr\}
 \ge\delta_j.
\end{equation}

For \(0\le j\le N\), define
\begin{align}
 u_{N,N}(x)
 &:=\ind_{\{x\ne\dagger\}},\notag\\
 u_{j,N}(x)
 &:=
 1-\int_{\mathcal W_j}
 \prod_{c\in\mathcal C_j}
 \left[1-u_{j+1,N}\bigl(\mathcal T_{j,c}(x,w)\bigr)\right]
 \nu_j(\dd w),
 \qquad j<N.
 \label{eq:bellman}
\end{align}
\begin{lemma}[Properties of the backward recursion]\label{lem:bellman-properties}
Every $u_{j,N}$ is Borel measurable, and
\begin{equation}\label{eq:bellman-properties}
 0\le u_{j,N}\le1,\qquad
 u_{j,N}(\dagger)=0,\qquad
 x\preceq_jx'\Longrightarrow
 u_{j,N}(x)\le u_{j,N}(x').
\end{equation}
\end{lemma}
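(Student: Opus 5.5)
We argue by backward induction on $j$, from $j=N$ down to $0$, and prove all three properties in \eqref{eq:bellman-properties} together with Borel measurability at each stage. The base case $j=N$ is immediate. The function $u_{N,N}=\ind_{\{x\ne\dagger\}}$ is the indicator of the Borel set $\mathcal S_N$ in the standard Borel space $\mathcal S_N^\dagger$. It takes values in $[0,1]$, and it vanishes at $\dagger$. It is increasing because $\dagger$ is the minimum of $\preceq_N$: if $x\preceq_N x'$ with $x\ne\dagger$, then $x'\ne\dagger$. Strictly speaking this needs $\dagger\preceq x'$ to force $x'=\dagger$ only when $x'=\dagger$, which is exactly what antisymmetry of the partial order gives.

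For the inductive step, assume the claims hold for $u_{j+1,N}$. Define
\[
 \Phi_j(x,w):=\prod_{c\in\mathcal C_j}\bigl[1-u_{j+1,N}(\mathcal T_{j,c}(x,w))\bigr].
\]
Each coordinate map $\mathcal T_{j,c}$ is Borel by \eqref{eq:transition-realization}, and $u_{j+1,N}$ is Borel, so $\Phi_j$ is a finite product of Borel functions on $\mathcal S_j^\dagger\times\mathcal W_j$. Hence $\Phi_j$ is jointly Borel with values in $[0,1]$. The map $x\mapsto\int\Phi_j(x,w)\,\nu_j(\dd w)$ is then Borel by the Fubini--Tonelli measurability statement for bounded jointly measurable functions, since $\nu_j$ is a probability measure on a standard Borel space. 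This gives measurability of $u_{j,N}$, and it also gives $0\le u_{j,N}\le1$.

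The remaining properties follow from the assumptions on $\mathcal T_j$. At $\dagger$, we have $\mathcal T_{j,c}(\dagger,w)=\dagger$ and $u_{j+1,N}(\dagger)=0$, so $\Phi_j(\dagger,w)=1$ and therefore $u_{j,N}(\dagger)=0$. For monotonicity, take $x\preceq_jx'$. The first monotonicity in \eqref{eq:monotonicities} gives $\mathcal T_{j,c}(x,w)\preceq_{j+1}\mathcal T_{j,c}(x',w)$ for every $c$ and $w$. The inductive hypothesis then gives $u_{j+1,N}(\mathcal T_{j,c}(x,w))\le u_{j+1,N}(\mathcal T_{j,c}(x',w))$. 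Each factor of the product lies in $[0,1]$, so pointwise we get $\Phi_j(x,w)\ge\Phi_j(x',w)$. Integrating against $\nu_j$ yields $u_{j,N}(x)\le u_{j,N}(x')$.

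Positive association is not needed for this lemma. The only point that needs care is measurability, specifically the measurability of a parametrized integral. That is the standard fact recorded in the appendix.
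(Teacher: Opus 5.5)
Your proof is correct and follows the paper's argument essentially step for step: backward induction from $j=N$; Borel measurability via composition with the Borel maps $\mathcal T_{j,c}$, finite products, and integration of a bounded jointly Borel function against $\nu_j$; and monotonicity from the pointwise decrease of the product of the complementary factors $1-u_{j+1,N}(\cdot)\in[0,1]$. Your base case also explains why $u_{N,N}$ is increasing, which the paper leaves implicit: since $\dagger$ is the minimum and $\preceq_N$ is antisymmetric, $x\preceq_N\dagger$ forces $x=\dagger$, although the sentence in which you state this is garbled.
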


\begin{proof}
Proceed backward from $j=N$.  The terminal function is Borel, takes values in
$[0,1]$, vanishes at $\dagger$, and is increasing.  Suppose the assertions
hold at time $j+1$.  For fixed $c$, the composition
$u_{j+1,N}\circ\mathcal T_{j,c}$ is Borel; a finite product remains Borel, and
integration against the fixed probability measure $\nu_j$ preserves Borel
measurability in $x$.  The recursion also keeps the values in $[0,1]$ and
vanishes at $\dagger$.  Finally, if $x\preceq_jx'$, then
\[
 u_{j+1,N}(\mathcal T_{j,c}(x,w))
 \le
 u_{j+1,N}(\mathcal T_{j,c}(x',w))
\]
for every $c,w$.  Hence the product of the complementary factors decreases,
so $u_{j,N}(x)\le u_{j,N}(x')$.
\end{proof}

\begin{lemma}[Quadratic union potential]\label{lem:union-potential}
Put
\[
 \varphi(t):=t+\frac12t^2.
\]
For every finite \(\mathcal C\) and \((z_c)_{c\in\mathcal C}\in[0,1]^{\mathcal C}\),
\begin{equation}\label{eq:union-potential}
 \varphi\left(1-\prod_{c\in\mathcal C}(1-z_c)\right)
 \le
 \sum_{c\in\mathcal C}\varphi(z_c).
\end{equation}
\end{lemma}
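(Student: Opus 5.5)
Our plan is to argue by induction on $|\mathcal C|$, after reducing to the case of two factors.  If $\mathcal C=\varnothing$ the left side is $\varphi(0)=0$, and if $|\mathcal C|=1$ both sides coincide.  For the inductive step, fix $c_0\in\mathcal C$ and put
\[
 s:=1-\prod_{c\ne c_0}(1-z_c)\in[0,1],\qquad z:=z_{c_0}.
\]
Then $1-\prod_{c\in\mathcal C}(1-z_c)=1-(1-s)(1-z)=s+z-sz$.  Once we have the two-variable inequality
\begin{equation*}
 \varphi(s+z-sz)\le\varphi(s)+\varphi(z),\qquad s,z\in[0,1],
\end{equation*}
the induction hypothesis $\varphi(s)\le\sum_{c\ne c_0}\varphi(z_c)$ completes the argument.

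The two-variable inequality is an explicit polynomial check.  Write $t:=s+z-sz$.  Expanding gives
\[
 \varphi(s)+\varphi(z)-\varphi(t)
 =sz+\tfrac12\bigl(s^2+z^2-(s+z-sz)^2\bigr).
\]
Next, $(s+z-sz)^2=(s+z)^2-2sz(s+z)+s^2z^2$, so
\[
 s^2+z^2-(s+z-sz)^2=-2sz+2sz(s+z)-s^2z^2.
\]
Substituting this back, the difference becomes
\[
 sz+\bigl(-sz+sz(s+z)-\tfrac12s^2z^2\bigr)
 =sz\Bigl(s+z-\tfrac12sz\Bigr).
\]
This is nonnegative on $[0,1]^2$, because $s+z-\tfrac12 sz\ge s+z-\tfrac12\min(s,z)\ge0$.

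We expect no real obstacle in this argument.  The only point needing care is the algebra: the linear part of $\varphi$ alone fails subadditivity in the wrong direction, since $t\le s+z$ but the deficit $sz$ must be compensated.  The quadratic term supplies exactly $-sz$ plus higher-order positive terms, so the constant $\tfrac12$ is the critical choice.  We would double-check the expansion at $s=z=1$: the left side is $\varphi(1)=3/2$, the right side is $3$, and the difference $1\cdot(2-\tfrac12)=3/2$ matches.
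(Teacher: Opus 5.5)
Your proof is correct and is essentially the paper's argument: the paper states the same two-variable identity $\varphi(a)+\varphi(b)-\varphi(a+b-ab)=ab\bigl(a+b-\tfrac12ab\bigr)\ge0$ on $[0,1]^2$ and iterates it, exactly as your induction does. One correction to your closing heuristic, which does not affect the proof: the linear part of $\varphi$ alone already satisfies the inequality, with slack $sz$ (since $s+z-sz\le s+z$). It is the quadratic term that consumes exactly this slack, which is why $\tfrac12$ is the largest admissible coefficient.
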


\begin{proof}
For \(a,b\in[0,1]\),
\[
 \varphi(a)+\varphi(b)-\varphi(a+b-ab)
 =ab\left(a+b-\frac12ab\right)\ge0.
\]
Iterating this inequality gives \eqref{eq:union-potential}.
\end{proof}

\begin{lemma}[One-particle coercivity]\label{lem:coercivity}
For \(0\le j<N\) and \(x\in\mathcal S_j\),
\begin{equation}\label{eq:coercivity}
 \int_{\mathcal W_j}
 \sum_{c\in\mathcal C_j}
 \varphi\!\left(u_{j+1,N}(\mathcal T_{j,c}(x,w))\right)\nu_j(\dd w)
 -\varphi(u_{j,N}(x))
 \ge
 \frac{\delta_j}{2}u_{j,N}(x)^2.
\end{equation}
\end{lemma}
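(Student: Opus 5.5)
Fix $x\in\mathcal S_j$, $j<N$. For each $w$ put $z_c(w):=u_{j+1,N}(\mathcal T_{j,c}(x,w))\in[0,1]$ and $Z(w):=1-\prod_c(1-z_c(w))$. By \eqref{eq:bellman}, $u:=u_{j,N}(x)=\int Z\,\dd\nu_j$. The plan is to compare pointwise and then use convexity of $\varphi$ together with the killing set.

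\Cref{lem:union-potential} gives, for every $w$,
\[
 \sum_{c}\varphi(z_c(w))\ge\varphi(Z(w)),
\]
so the left side of \eqref{eq:coercivity} is at least $\int\varphi(Z)\,\dd\nu_j-\varphi(u)$. Since $\varphi(t)=t+\tfrac12t^2$ is linear plus quadratic and $\int Z\,\dd\nu_j=u$, we have
\[
 \int\varphi(Z)\,\dd\nu_j-\varphi(u)=\tfrac12\Bigl(\int Z^2\,\dd\nu_j-u^2\Bigr)=\tfrac12\operatorname{Var}_{\nu_j}(Z).
\]
It therefore suffices to show $\operatorname{Var}_{\nu_j}(Z)\ge\delta_j u^2$.

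For this, let $K:=\{w:\mathcal T_{j,c}(x,w)=\dagger\ \forall c\}$. By \eqref{eq:abstract-killing}, $\nu_j(K)\ge\delta_j$. On $K$ each $z_c=u_{j+1,N}(\dagger)=0$ by \Cref{lem:bellman-properties}, hence $Z=0$ there. Then $u=\int_{K^c}Z\,\dd\nu_j$, and Cauchy--Schwarz gives
\[
 u^2\le\nu_j(K^c)\int Z^2\,\dd\nu_j\le(1-\delta_j)\int Z^2\,\dd\nu_j.
\]
Consequently
\[
 \operatorname{Var}(Z)=\int Z^2-u^2\ge u^2\Bigl(\frac1{1-\delta_j}-1\Bigr)\ge\delta_j u^2.
\]
The case $\delta_j=1$ needs no division: then $Z=0$ almost surely, so $u=0$ and the bound is trivial.

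The main point to get right is that the variance gap comes from the positive-mass set where $Z$ vanishes. Cauchy--Schwarz restricted to $K^c$ captures exactly this, and the convexity step is exact because $\varphi$ is quadratic. Measurability of $Z$ is handled as in \Cref{lem:bellman-properties}. Association is not needed for this lemma; it enters only later, in the population comparison.
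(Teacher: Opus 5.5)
Your proposal is correct and is essentially the paper's proof: \Cref{lem:union-potential} reduces the left side to $\frac12\bigl(\int Z^2\,\dd\nu_j-u^2\bigr)$, and Cauchy--Schwarz on the complement of a set of $\nu_j$-mass at least $\delta_j$ on which $Z$ vanishes gives the gap $\delta_j u^2$. The differences are cosmetic: the paper works with the zero set $\{R=0\}$, which contains your killing set $K$, and it concludes via $\int R^2\,\dd\nu_j-s^2\ge\delta_j\int R^2\,\dd\nu_j\ge\delta_j s^2$ rather than your slightly sharper factor $\delta_j/(1-\delta_j)$.
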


\begin{proof}
Put
\[
 z_c(w):=u_{j+1,N}(\mathcal T_{j,c}(x,w)),\qquad
 R(w):=1-\prod_{c\in\mathcal C_j}(1-z_c(w)),
\]
\[
 s:=u_{j,N}(x)=\int R\,\dd\nu_j,
 \qquad
 D:=\{w:R(w)=0\}.
\]
By \eqref{eq:abstract-killing}, \(\nu_j(D)\ge\delta_j\).  Hence
\[
 s^2
 =\left(\int_{D^c}R\,\dd\nu_j\right)^2
 \le\nu_j(D^c)\int R^2\,\dd\nu_j
 \le(1-\delta_j)\int R^2\,\dd\nu_j,
\]
and therefore
\begin{equation}\label{eq:killed-variance}
 \int R^2\,\dd\nu_j-s^2\ge\delta_js^2.
\end{equation}
By \Cref{lem:union-potential},
\begin{align*}
 \int\sum_c\varphi(z_c)\,\dd\nu_j-\varphi(s)
 &\ge\int\varphi(R)\,\dd\nu_j
 -\varphi\!\left(\int R\,\dd\nu_j\right)\\
 &=\frac12\left(\int R^2\,\dd\nu_j-s^2\right)
 \ge\frac{\delta_j}{2}s^2.
\end{align*}
\end{proof}

For
\[
 \xi=(q,\mathbf x)\in\Pop(\mathcal S_j),
\]
define
\begin{equation}\label{eq:bellman-population}
 V_{j,N}(\xi)
 :=
 1-\prod_{r\in[q]}\bigl(1-u_{j,N}(\mathbf x_r)\bigr).
\end{equation}

\begin{proposition}[Positive-association comparison]\label{prop:bellman-fkg}
For \(0\le j\le N\),
\begin{equation}\label{eq:bellman-fkg}
 \Ee\!\left[
 \ind_{\{\Xi_N\ne0\}}
 \mid\mathcal F_j
 \right]
 \le
 V_{j,N}(\Xi_j)
 \qquad\text{a.s.}
\end{equation}
\end{proposition}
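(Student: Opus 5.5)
We argue by backward induction on $j$, from $j=N$ down to $j=0$. At $j=N$ the statement is an identity. Indeed, $\Xi_N$ is $\mathcal F_N$-measurable, and every entry of $\Xi_N$ lies in $\mathcal S_N$ (the $\operatorname{live}$ map removes all $\dagger$'s). Hence $u_{N,N}\equiv1$ on the entries, so $V_{N,N}(\Xi_N)=\ind_{\{|\Xi_N|>0\}}=\ind_{\{\Xi_N\ne0\}}$.

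For the inductive step, assume \eqref{eq:bellman-fkg} at time $j+1$. The tower property gives
\[
 \Ee[\ind_{\{\Xi_N\ne0\}}\mid\mathcal F_j]\le\Ee[V_{j+1,N}(\Xi_{j+1})\mid\mathcal F_j].
\]
By \eqref{eq:shared-update}, together with $u_{j+1,N}(\dagger)=0$ from \Cref{lem:bellman-properties}, the deleted $\dagger$-coordinates contribute factors equal to one. Therefore
\[
 1-V_{j+1,N}(\Xi_{j+1})=\prod_{r\in[q_j]}\Phi(\mathbf X_{j,r},W_j),\qquad
 \Phi(x,w):=\prod_{c\in\mathcal C_j}\bigl[1-u_{j+1,N}(\mathcal T_{j,c}(x,w))\bigr].
\]
Since $\Xi_j$ is $\mathcal F_j$-measurable and $W_j\perp\!\!\!\perp\mathcal F_j$, we may freeze $\Xi_j=\xi=(q,\mathbf x)$ (the freezing lemma, i.e.\ regular conditional distributions, applies since everything is standard Borel and $\Phi$ is jointly Borel by \Cref{lem:bellman-properties}). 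This yields
\[
 \Ee[1-V_{j+1,N}(\Xi_{j+1})\mid\mathcal F_j]=\int\prod_{r\in[q]}\Phi(\mathbf x_r,w)\,\nu_j(\dd w)\quad\text{evaluated at }\xi=\Xi_j .
\]

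The key step is to bound this integral from below by $\prod_r\int\Phi(\mathbf x_r,w)\,\nu_j(\dd w)=\prod_r(1-u_{j,N}(\mathbf x_r))=1-V_{j,N}(\xi)$, which would close the induction. This is where positive association \eqref{eq:association-definition} enters. For fixed $x$, the map $w\mapsto\mathcal T_{j,c}(x,w)$ is decreasing into $(\mathcal S_{j+1}^\dagger,\preceq_{j+1})$ by \eqref{eq:monotonicities}, and $u_{j+1,N}$ is increasing by \Cref{lem:bellman-properties}. So $w\mapsto u_{j+1,N}(\mathcal T_{j,c}(x,w))$ is decreasing, each factor $1-u_{j+1,N}(\cdot)$ is increasing in $w$, and the finite product $\Phi(x,\cdot)$ of nonnegative increasing functions is increasing. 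Each $\Phi(\mathbf x_r,\cdot)$ is bounded (values in $[0,1]$), Borel, and increasing, so \eqref{eq:association-definition} with $F_r=\Phi(\mathbf x_r,\cdot)$ gives exactly the required inequality. When $q=0$ both sides equal $1$, and when $q=1$ the inequality is an equality.

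The main obstacle is bookkeeping rather than an idea: the argument relies on the direction convention, namely that a larger input is more destructive, so that the survival-complement factors are increasing in $w$. Without this, association would give the wrong inequality. One must also check that the products are nonnegative (they are, since $u\le1$) and justify the conditional freezing measurably. The $\mathcal F_{j+1}$-completion is harmless because the conditional expectation identities hold almost surely.
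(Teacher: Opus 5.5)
Your proof is correct and follows essentially the same route as the paper. Both arguments establish the one-step inequality $\Ee[V_{j+1,N}(\Xi_{j+1})\mid\mathcal F_j]\le V_{j,N}(\Xi_j)$ by applying positive association to the increasing factors $\Phi(\mathbf x_r,\cdot)$ (the paper's $g_{j,r}$), use $u_{j+1,N}(\dagger)=0$ to pass through $\operatorname{live}$, and iterate backward from $V_{N,N}(\Xi_N)=\ind_{\{\Xi_N\ne0\}}$; your explicit freezing step and the $q\in\{0,1\}$ cases only spell out details the paper leaves implicit.
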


\begin{proof}
Condition on $\mathcal F_j$.  The current population is then fixed, whereas
$W_j$ has law $\nu_j$.  If
\(\Xi_j=(q,\mathbf x)\), put
\[
 g_{j,r}(w)
 :=
 \prod_{c\in\mathcal C_j}
 \left[
 1-u_{j+1,N}
 \bigl(\mathcal T_{j,c}(\mathbf x_r,w)\bigr)
 \right].
\]
By \eqref{eq:monotonicities} and
\eqref{eq:bellman-properties}, \(g_{j,r}\uparrow\) in \(w\).  Hence
\begin{align*}
 \Ee\!\left[
 \prod_{r\in[q]}g_{j,r}(W_j)
 \mid\mathcal F_j
 \right]
 &\ge
 \prod_{r\in[q]}
 \int g_{j,r}\,\dd\nu_j\\
 &=
 \prod_{r\in[q]}
 \bigl(1-u_{j,N}(\mathbf x_r)\bigr),
 \end{align*}
Since \(u_{j+1,N}(\dagger)=0\), the two sides of the last inequality are,
respectively,
\[
 1-\Ee\!\left[V_{j+1,N}(\Xi_{j+1})\mid\mathcal F_j\right],
 \qquad
 1-V_{j,N}(\Xi_j).
\]
Therefore
\[
 \Ee\!\left[
 V_{j+1,N}(\Xi_{j+1})
 \mid\mathcal F_j
 \right]
 \le
 V_{j,N}(\Xi_j).
\]
Since
\[
 V_{N,N}(\Xi_N)=\ind_{\{\Xi_N\ne0\}},
\]
backward iteration gives \eqref{eq:bellman-fkg}.
\end{proof}

\begin{theorem}[Survival bound and extinction criterion]\label{thm:shared-resistance}
If \(m_j>0\) for \(0\le j\le N\), then
\begin{equation}\label{eq:shared-resistance}
 \Pp(\Xi_N\ne0)
 \le
 \left[
 \frac{2}{3m_N}
 +\frac16\sum_{j<N}\frac{\delta_j}{m_j}
 \right]^{-1}.
\end{equation}
Consequently,
\begin{equation}\label{eq:shared-extinction}
 \left[
 m_j>0\ (j\ge0),\qquad
 \sum_{j\ge0}\frac{\delta_j}{m_j}=\infty
 \right]
 \Longrightarrow
 \Pp(\Xi_N\ne0)\longrightarrow0.
\end{equation}
If, in addition, $(\mathfrak R_j)_{j\ge0}$ is a sequence of random sets such
that, for every $j\ge0$,
\[
 \mathfrak R_{j+1}\subseteq\mathfrak R_j\quad\text{a.s.},\qquad
 \{\Xi_j\ne0\}=\{\mathfrak R_j\ne\varnothing\},
\]
then
\begin{equation}\label{eq:residual-extinction}
 \Pp(\exists N:\mathfrak R_N=\varnothing)=1.
\end{equation}
\end{theorem}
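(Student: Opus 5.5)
The plan is to run a one-particle potential forward in time, using the quadratic function $\varphi$ from \Cref{lem:union-potential}, and to compare it with the backward survival function through \Cref{prop:bellman-fkg}. Fix $N$ and put
\[
 U_j:=\Ee\sum_{x\in\Xi_j}u_{j,N}(x),\qquad
 \Phi_j:=\Ee\sum_{x\in\Xi_j}\varphi\bigl(u_{j,N}(x)\bigr),\qquad 0\le j\le N,
\]
which are finite because $m_j<\infty$ and $0\le u_{j,N}\le1$.

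First I collect the elementary relations. By \Cref{prop:bellman-fkg} with $j=0$ and the union bound $1-\prod_r(1-u_r)\le\sum_r u_r$, we get $p:=\Pp(\Xi_N\ne0)\le\Ee V_{0,N}(\Xi_0)\le U_0$. Since $t\le\varphi(t)\le\frac32t$ on $[0,1]$, we have $U_j\le\Phi_j\le\frac32U_j\le\frac32m_j$. At the terminal time $u_{N,N}\equiv1$ on live particles, so $\Phi_N=\frac32m_N$.

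Second, I would prove a forward increment inequality. Condition on $\mathcal F_j$ and use \eqref{eq:shared-update}. Dead coordinates contribute nothing, because $u_{j+1,N}(\dagger)=0$ and $\varphi(0)=0$, and $W_j$ is independent of $\mathcal F_j$ with law $\nu_j$. Applying \Cref{lem:coercivity} to each parent then gives
\[
 \Ee\Bigl[\sum_{y\in\Xi_{j+1}}\varphi(u_{j+1,N}(y))\,\Big|\,\mathcal F_j\Bigr]
 \ge\sum_{x\in\Xi_j}\Bigl[\varphi(u_{j,N}(x))+\tfrac{\delta_j}{2}u_{j,N}(x)^2\Bigr].
\]
Taking expectations and using Cauchy--Schwarz in the form $\Ee\sum_{x\in\Xi_j}u^2\ge U_j^2/m_j$, together with $U_j\ge\frac23\Phi_j$, yields
\[
 \Phi_{j+1}\ge\Phi_j+\frac{2\delta_j}{9}\frac{\Phi_j^2}{m_j}.
\]
Positive association enters only through \Cref{prop:bellman-fkg}. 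This step needs no independence between particles, since it is a sum of one-particle expectations.

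Third, I convert the increment inequality into a bound on reciprocals; this is the main obstacle, because no upper bound on $\Phi_{j+1}/\Phi_j$ is available. The way around it is monotonicity: for fixed $a>0$, the map $x\mapsto a^{-1}-(a+x)^{-1}$ is increasing. With $a=\Phi_j$ and $x\ge ca^2/m_j$, where $c=2\delta_j/9$, this gives
\[
 \frac1{\Phi_j}-\frac1{\Phi_{j+1}}
 \ge\frac{c}{m_j+c\Phi_j}
 \ge\frac{c}{m_j\bigl(1+\tfrac32c\bigr)}
 \ge\frac{3c}{4m_j}=\frac{\delta_j}{6m_j},
\]
using $\Phi_j\le\frac32m_j$ and $\delta_j\le1$. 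If some $\Phi_j=0$, the bound $p\le\Phi_0\le\cdots$ is trivial, because $\Phi$ is nondecreasing, so $\Phi_0=0$ and hence $p=0$. Telescoping from $0$ to $N$ and using $\Phi_N=\frac32m_N$ gives
\[
 \frac1p\ge\frac1{\Phi_0}\ge\frac2{3m_N}+\frac16\sum_{j<N}\frac{\delta_j}{m_j},
\]
which is \eqref{eq:shared-resistance}.

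Finally, \eqref{eq:shared-extinction} follows directly: the bracket is at least $\frac16\sum_{j<N}\delta_j/m_j$, which tends to $\infty$ as $N\to\infty$. For \eqref{eq:residual-extinction}, the events $\{\mathfrak R_N\ne\varnothing\}=\{\Xi_N\ne0\}$ are a.s.\ decreasing in $N$ and their probabilities tend to $0$. Hence their intersection has probability zero, so almost surely some $\mathfrak R_N$ is empty.
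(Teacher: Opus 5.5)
Your proposal is correct and follows the paper's proof essentially step for step. Your $\Phi_j$ is the paper's $C_j^{(N)}$, and the bound $p\le\Phi_0$ via \Cref{prop:bellman-fkg}, the coercivity increment $\Phi_{j+1}-\Phi_j\ge\frac{2\delta_j}{9}\Phi_j^2/m_j$ obtained through Cauchy--Schwarz, the monotone reciprocal estimate yielding $\delta_j/(6m_j)$, the telescoping with $\Phi_N=\frac32m_N$, the degenerate case $\Phi_0=0$, and the final passage to \eqref{eq:residual-extinction} all match the paper's argument.
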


\begin{proof}
Put
\[
 C_j^{(N)}
 :=
 \Ee\sum_{x\in\Xi_j}\varphi(u_{j,N}(x)),
\]
By \eqref{eq:bellman-fkg},
\begin{equation}\label{eq:initial-conductance}
 \Pp(\Xi_N\ne0)
 \le\Ee V_{0,N}(\Xi_0)
 \le\Ee\sum_{x\in\Xi_0}u_{0,N}(x)
 \le C_0^{(N)}.
\end{equation}
To pass from the one-particle estimate to the population, condition on
$\mathcal F_j$ and use \eqref{eq:shared-update}.  Linearity of the sum is
available even though all particles use the same $W_j$:
\begin{align*}
 &\Ee\!\left[
 \sum_{y\in\Xi_{j+1}}\varphi(u_{j+1,N}(y))
 \middle|\mathcal F_j\right]\\
 &\qquad=
 \sum_{x\in\Xi_j}\int_{\mathcal W_j}
 \sum_{c\in\mathcal C_j}
 \varphi\!\left(u_{j+1,N}(\mathcal T_{j,c}(x,w))\right)
 \nu_j(\dd w).
\end{align*}
Applying \Cref{lem:coercivity} to each current entry and then taking
expectations gives
\begin{equation}\label{eq:conductance-increment}
 C_{j+1}^{(N)}-C_j^{(N)}
 \ge
 \frac{\delta_j}{2}
 \Ee\sum_{x\in\Xi_j}u_{j,N}(x)^2.
\end{equation}
Moreover,
\[
 \sum_{x\in\Xi_j}u_{j,N}(x)
 \le
 |\Xi_j|^{1/2}
 \left(\sum_{x\in\Xi_j}u_{j,N}(x)^2\right)^{1/2}.
\]
Cauchy--Schwarz and \(u\le\varphi(u)\le\frac32u\) therefore yield
\begin{equation}\label{eq:population-cs}
 \bigl(C_j^{(N)}\bigr)^2
 \le
 \frac94m_j\Ee\sum_{x\in\Xi_j}u_{j,N}(x)^2.
\end{equation}
Since \(0\le u_{j,N}\le1\),
\begin{equation}\label{eq:conductance-linear-bounds}
 C_j^{(N)}\le\frac32m_j.
\end{equation}
Thus
\begin{equation}\label{eq:conductance-growth}
 C_{j+1}^{(N)}-C_j^{(N)}
 \ge
 \frac29\frac{\delta_j}{m_j}
 \bigl(C_j^{(N)}\bigr)^2.
\end{equation}
If \(C_0^{(N)}=0\), then \eqref{eq:initial-conductance} proves the result.
Otherwise \eqref{eq:conductance-growth} gives \(C_j^{(N)}>0\) for every
\(j\le N\), while \eqref{eq:conductance-linear-bounds} gives
\[
 \frac{\delta_j}{m_j}C_j^{(N)}\le\frac32.
\]
Write
$a_j=(2/9)(\delta_j/m_j)$ and
$D_j=C_{j+1}^{(N)}-C_j^{(N)}$.  Since
$D_j\ge a_j(C_j^{(N)})^2$ and the function
$D\mapsto D/[C_j^{(N)}(C_j^{(N)}+D)]$ is increasing,
\begin{align*}
 \frac1{C_j^{(N)}}-\frac1{C_{j+1}^{(N)}}
 &\ge
 \frac{(2/9)\delta_j/m_j}
 {1+(2/9)(\delta_j/m_j)C_j^{(N)}}\\
 &\ge\frac16\frac{\delta_j}{m_j}.
\end{align*}
Hence
\[
 \frac1{C_0^{(N)}}
 \ge
 \frac1{C_N^{(N)}}
 +\frac16\sum_{j<N}\frac{\delta_j}{m_j},
 \qquad
 C_N^{(N)}=\frac32m_N.
\]
Together with \eqref{eq:initial-conductance}, this proves
\eqref{eq:shared-resistance}.  Finally,
\[
 \Pp\!\left(\bigcap_N\{\mathfrak R_N\ne\varnothing\}\right)
 =
 \lim_N\Pp(\Xi_N\ne0)
 =0.
\]
\end{proof}

\section{Application to uncovered dyadic cells}
\label{sec:spatial-application}

We now verify, one hypothesis at a time, that the uncovered-cell process fits
the abstract recursion of the preceding section.  A deletion step reveals one
Poisson block and has one output slot; a refinement step is deterministic and
has $2^d$ output slots.  The expected population used below is the actual mean
number of cells meeting the uncovered set, not an auxiliary scheduled mean.

For \(k\ge k_*\) and \(0\le p\le M_k\), put
\begin{equation}\label{eq:time-embedding}
 \tau(k,p)
 :=
 \sum_{r=k_*}^{k-1}(M_r+1)+p.
\end{equation}
\[
 \tau(k,p-1)
 \xrightarrow{\operatorname{delete}(k,p)}
 \tau(k,p)\quad(1\le p\le M_k),
 \qquad
 \tau(k,M_k)
 \xrightarrow{\operatorname{refine}(k)}
 \tau(k+1,0).
\]

Let \(\mathcal K_+(Q)\) be the nonempty compact subsets of \(Q\), with the
Hausdorff Borel structure, and define
\begin{equation}\label{eq:spatial-state}
 \mathcal S_{\tau(k,p)}
 :=
 \bigsqcup_{\alpha\in\mathcal I_k}
 \bigl(\{\alpha\}\times\mathcal K_+(Q_{k,\alpha})\bigr).
\end{equation}
Adjoin \(\dagger\) and order each space by
\[
 \dagger\preceq_{\tau(k,p)}(\alpha,A),
 \qquad
 (\alpha,A)\preceq_{\tau(k,p)}(\alpha',A')
 \Longleftrightarrow
 \alpha=\alpha'\ \text{and}\ A\subseteq A'.
\]

Fix a total order on $\mathcal I_{k_*}$ and a total order on
$\{0,1\}^d$.  Recursively order $\mathcal I_{k+1}$ through the bijection
\[
 \mathcal I_k\times\{0,1\}^d\longrightarrow\mathcal I_{k+1},
 \qquad
 (\alpha,\varepsilon)\longmapsto
 \operatorname{ch}_k(\alpha,\varepsilon),
\]
with the parent coordinate first.  Give every subset of $\mathcal I_k$ the
induced dyadic-tree order.  Define
\begin{equation}\label{eq:spatial-Xi}
\begin{aligned}
 \mathcal A_{k,p}
 &:=
 \{\alpha\in\mathcal I_k:
 \mathfrak R_{k,p}\cap Q_{k,\alpha}\ne\varnothing\},\\
 \Xi_{k,p}
 &:=
 \bigl(
 |\mathcal A_{k,p}|,
 \bigl((\alpha,\mathfrak R_{k,p}\cap Q_{k,\alpha})\bigr)_
      {\alpha\in\mathcal A_{k,p}}^{\uparrow}
 \bigr)\in\Pop(\mathcal S_{\tau(k,p)}),\\
 \qquad
 \Xi_{\tau(k,p)}&:=\Xi_{k,p},
 \qquad
 \mathfrak R_{\tau(k,p)}:=\mathfrak R_{k,p}.
\end{aligned}
\end{equation}
By \eqref{eq:poisson-superposition},
\begin{equation}\label{eq:packet-boundary}
 \mathfrak R_{k,M_k}=\mathfrak R_{k+1,0}.
\end{equation}
Consequently,
\begin{equation}\label{eq:survival-identification}
 \Xi_{\tau(k,p)}\ne0
 \quad\Longleftrightarrow\quad
 \mathfrak R_{k,p}\ne\varnothing.
\end{equation}

At a deletion source and a refinement source, respectively, set
\begin{align}
 \mathcal W_{\tau(k,p-1)}
 &:=
 \Cloud_{\mathcal N_k}(\T),
 &
 W_{\tau(k,p-1)}
 &:=
 \boldsymbol\Pi_{k,p},
 \qquad 1\le p\le M_k,
 \label{eq:deletion-innovation}\\
 \mathcal W_{\tau(k,M_k)}
 &:=
 \{\star\},
 &
 W_{\tau(k,M_k)}
 &:=
 \star.
 \label{eq:refinement-innovation}
\end{align}
Let
\[
 \nu_j:=\mathcal L(W_j),
 \qquad
 \mathcal F_j
 :=
 \overline{\sigma(W_i:0\le i<j)}^{\Pp}.
\]
Then
\begin{equation}\label{eq:spatial-association}
 W_j\perp\!\!\!\perp\mathcal F_j,
 \qquad
 \nu_j\ \text{is positively associated}.
\end{equation}
At deletion times this follows from \Cref{lem:cloud-association}; at refinement
times $\nu_j$ is a point mass, so the same inequality is immediate.

Use the variable slot sets
\begin{equation}\label{eq:spatial-slots}
 \mathcal C_{\tau(k,p-1)}:=\{\star\},
 \qquad
 \mathcal C_{\tau(k,M_k)}:=\{0,1\}^d.
\end{equation}
The singleton has its unique order, and \(\{0,1\}^d\) has the fixed order
chosen immediately before \eqref{eq:spatial-Xi}.
The deletion transition is
\begin{equation}\label{eq:deletion-transition}
 \mathcal T_{k,p}^{\mathrm{del}}:
 \mathcal S_{\tau(k,p-1)}^\dagger
 \times\Cloud_{\mathcal N_k}(\T)
 \longrightarrow
 (\mathcal S_{\tau(k,p)}^\dagger)^{\{\star\}},
\end{equation}
\begin{equation}\label{eq:deletion-map}
 \mathcal T_{k,p,\star}^{\mathrm{del}}(x,\omega)
 :=
 \begin{cases}
 (\alpha,A\setminus\Cov(\omega)),
 &x=(\alpha,A),\
 A\setminus\Cov(\omega)\ne\varnothing,\\
 \dagger,&\text{otherwise}.
 \end{cases}
\end{equation}
The refinement transition is
\begin{equation}\label{eq:refinement-transition}
 \mathcal T_k^{\mathrm{ref}}:
 \mathcal S_{\tau(k,M_k)}^\dagger
 \times\{\star\}
 \longrightarrow
 (\mathcal S_{\tau(k+1,0)}^\dagger)^{\{0,1\}^d},
\end{equation}
\begin{equation}\label{eq:refinement-map}
 \mathcal T_{k,\varepsilon}^{\mathrm{ref}}(x,\star)
 :=
 \begin{cases}
 \bigl(
 \operatorname{ch}_k(\alpha,\varepsilon),
 A\cap Q_{k+1,\operatorname{ch}_k(\alpha,\varepsilon)}
 \bigr),
 &\begin{gathered}
 x=(\alpha,A),\\[-1mm]
 A\cap Q_{k+1,\operatorname{ch}_k(\alpha,\varepsilon)}
 \ne\varnothing,
 \end{gathered}\\
 \dagger,&\text{otherwise}.
 \end{cases}
\end{equation}
With the active-point preorder
\[
 \omega\le\omega'
 \quad\Longleftrightarrow\quad
 [\omega_n]\subseteq[\omega'_n]
 \quad(n\in\mathcal N_k),
\]
one has
\begin{align}
 A\subseteq A'
 &\Longrightarrow
 A\setminus\Cov(\omega)
 \subseteq
 A'\setminus\Cov(\omega),
 \notag\\
 \omega\le\omega'
 &\Longrightarrow
 A\setminus\Cov(\omega')
 \subseteq
 A\setminus\Cov(\omega),
 \notag\\
 A\subseteq A'
 &\Longrightarrow
 A\cap Q_{k+1,\beta}
 \subseteq
 A'\cap Q_{k+1,\beta}.
 \label{eq:spatial-monotonicity}
\end{align}
Thus \eqref{eq:monotonicities} holds.  The transition maps are Borel by the
hyperspace measurability argument in \Cref{app:measurability}, so
\eqref{eq:transition-realization} is also satisfied.

Moreover,
\begin{align}
 \Xi_{\tau(k,p)}
 &=
 \operatorname{live}\!\left(
 \bigl(
 \mathcal T_{k,p,\star}^{\mathrm{del}}
 (x,\boldsymbol\Pi_{k,p})
 \bigr)_{x\in\Xi_{\tau(k,p-1)}}
 \right),
 \label{eq:spatial-deletion-update}\\
 \Xi_{\tau(k+1,0)}
 &=
 \operatorname{live}\!\left(
 \bigl(
 \mathcal T_{k,\varepsilon}^{\mathrm{ref}}(x,\star)
 \bigr)_{\substack{x\in\Xi_{\tau(k,M_k)}\\
                    \varepsilon\in\{0,1\}^d}}
 \right).
 \label{eq:spatial-refinement-update}
\end{align}
In \eqref{eq:spatial-refinement-update}, the pairs $(x,\varepsilon)$ are
ordered with the parent coordinate first and the fixed child order second.
Hence \((\Xi_j)_j\) is exactly the process in
\eqref{eq:shared-update}, with
\[
 \Xi_0
 =
 \left(
 |\mathcal I_{k_*}|,
 \bigl((\alpha,Q_{k_*,\alpha})\bigr)_{\alpha\in\mathcal I_{k_*}}^\uparrow
 \right).
\]

At deletion and refinement sources, define
\begin{equation}\label{eq:spatial-resistance-data}
 \begin{aligned}
 m_{\tau(k,p-1)}
 &:=
 \Ee|\Xi_{k,p-1}|
 =
 m_{k,p-1},
 &
 \delta_{\tau(k,p-1)}
 &:=
 (1-\e^{-q_d})b_{k,p},
 \\
 m_{\tau(k,M_k)}
 &:=
 \Ee|\Xi_{k,M_k}|,
 &
 \delta_{\tau(k,M_k)}
 &:=
 0.
 \end{aligned}
\end{equation}
By \Cref{lem:one-cell},
\begin{align}
 m_{\tau(k,p-1)}
 &\le
 C_d\ell_k^{-d}\e^{-a_{k,p-1}},
 \notag\\
 \inf_{x\in\mathcal S_{\tau(k,p-1)}}
 \nu_{\tau(k,p-1)}
 \bigl\{w:\mathcal T_{k,p,\star}^{\mathrm{del}}(x,w)=\dagger\bigr\}
 &\ge
 \delta_{\tau(k,p-1)}.
 \label{eq:spatial-one-cell}
\end{align}
This is precisely \eqref{eq:abstract-killing} at deletion sources.  At a
refinement source we chose $\delta_j=0$, for which the same condition is
automatic.  Hence all assumptions of
\Cref{thm:shared-resistance} have now been verified for the process
$(\Xi_j)_j$.
Therefore, if all \(m_j>0\),
\begin{align*}
 \sum_{j\ge0}\frac{\delta_j}{m_j}
 &=
 (1-\e^{-q_d})
 \sum_{k\ge k_*}\sum_{p=1}^{M_k}
 \frac{b_{k,p}}{m_{k,p-1}}
 \notag\\
 &\ge
 c_d\sum_{k\ge k_*}\ell_k^d
 \sum_{p=1}^{M_k}
 b_{k,p}\e^{a_{k,p-1}}
 \notag\\
 &\ge
 \frac{c_d}{\e-1}
 \sum_{k\ge k_*}\ell_k^d
 \sum_{p=1}^{M_k}
 \bigl(
 \e^{a_{k,p}}-\e^{a_{k,p-1}}
 \bigr)
 \notag\\
 &=
 \frac{c_d}{\e-1}
 \sum_{k\ge k_*}\ell_k^d
 \bigl(
 \e^{\whatA_k}-\e^{\whatA_{k-1}}
 \bigr).
\end{align*}
Here
\[
 0\le b\le1
 \Longrightarrow
 \e^{a+b}-\e^a
 \le(\e-1)b\e^a,
\]
Consequently, \Cref{lem:analytic-mass} gives
\begin{equation}\label{eq:actual-resistance-diverges}
 \left[
 m_j>0\ (j\ge0),\qquad
 \int_{\T}\e^H\,\dd m=\infty
 \right]
 \Longrightarrow
 \sum_{j\ge0}\frac{\delta_j}{m_j}=\infty.
\end{equation}

\begin{theorem}[Poissonized sufficiency]\label{thm:poisson-sufficiency}
If \(\int_{\T}\e^H\,\dd m=\infty\), then
\begin{equation}\label{eq:poisson-tail-cover}
 \Pp\!\left(
 \forall n_0\ \exists n_1\ge n_0:\
 \T\subseteq
 \bigcup_{n=n_0}^{n_1}
 \Cov(\Pi_n^{\mathrm{full}})
 \right)=1.
\end{equation}
\end{theorem}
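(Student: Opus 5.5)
The plan has two stages: first show that the residual set becomes empty in finite time, then apply the same argument to every tail of the sequence. We begin with the process $(\Xi_j)_j$ of \Cref{sec:spatial-application}, whose hypotheses for \Cref{thm:shared-resistance} were verified there, and apply \eqref{eq:residual-extinction} with $\mathfrak R_j=\mathfrak R_{\tau(k,p)}$. This requires three facts.
\begin{itemize}
\item[(i)] The sets decrease. This is immediate from \eqref{eq:Rkp} and \eqref{eq:packet-boundary}.
\item[(ii)] The identification $\{\Xi_j\ne0\}=\{\mathfrak R_j\ne\varnothing\}$. This is \eqref{eq:survival-identification}.
\item[(iii)] Divergence of $\sum_j\delta_j/m_j$. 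This is \eqref{eq:actual-resistance-diverges}, which assumes every $m_j>0$.
\end{itemize}
If some $m_j=0$, then $\Xi_j=0$ almost surely, so $\mathfrak R_j=\varnothing$ almost surely and extinction holds trivially. Hence we may assume all $m_j>0$. The conclusion is that almost surely some finite $(k,p)$ has $\mathfrak R_{k,p}=\varnothing$. Every ball in $\mathfrak R_{k,p}$ belongs to $\Cov(\Pi_n^{\mathrm{full}})$ for some $n\le\whatN_k$, so $\T\subseteq\bigcup_{n\le n_1}\Cov(\Pi_n^{\mathrm{full}})$ with $n_1=\whatN_k$.

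To obtain covering by the tail $n\ge n_0$, we rerun the whole construction with the first $n_0-1$ radius types deleted. By the prefix-deletion remark of \Cref{sec:model}, the shifted sequence $(r_n)_{n\ge n_0}$ still satisfies $\int\e^{H^{(n_0-1)}}\,\dd m=\infty$. The argument above therefore applies to this sequence. The point needing care is that the packets, blocks and clouds are rebuilt from scratch for the shifted sequence. Only the law of $(\Pi_n^{\mathrm{full}})_{n\ge n_0}$ matters, since the event $\{\exists n_1:\T\subseteq\bigcup_{n_0\le n\le n_1}\Cov(\Pi_n^{\mathrm{full}})\}$ depends only on these clouds. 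By \eqref{eq:poisson-superposition}, these clouds are independent, and each has a $\Pois(1)$ count with i.i.d.\ uniform points, whatever block decomposition produced them. So the event has the same probability under the shifted construction, namely one.

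Intersecting over the countably many $n_0\in\mathbb N$ gives \eqref{eq:poisson-tail-cover}. I expect the main obstacle to be the law-invariance step. One has to check that the event in question is a measurable function of $(\Pi_n^{\mathrm{full}})_{n\ge n_0}$ alone, using only the active prefix of each cloud, so that the probability computed in the rebuilt construction transfers to the original one. A secondary point is the degenerate case $m_j=0$ just handled, together with the choice of $k_*$ and $\eta$ for the shifted sequence. These are fixed by the shifted radii, and they need to satisfy only the constraints $\rho_k\le\ell_k/4$ and $r_n<1/4$ used in \Cref{lem:one-cell}.
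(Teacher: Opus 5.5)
Your proposal is correct and follows the paper's proof essentially step for step. The paper likewise reindexes the tail from $n_0$ and uses the prefix-deletion bound to get $\int\e^{H^{[n_0]}}\,\dd m=\infty$; it then disposes of the case of a vanishing mean population, and otherwise combines \eqref{eq:actual-resistance-diverges} with \Cref{thm:shared-resistance}, transfers the conclusion to the original clouds via the law identity in \eqref{eq:poisson-superposition}, and intersects over $n_0$. Your preliminary stage for the unshifted sequence is just the case $n_0=1$ of your second stage and can be dropped.
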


\begin{proof}
Fix $n_0$ and reindex the deterministic tail by
\[
 r^{[n_0]}_j:=r_{n_0+j-1},\qquad
 \Pi^{[n_0]}_j:=\Pi_{n_0+j-1}^{\mathrm{full}},\qquad j\ge1.
\]
Its overlap functions and accumulated overlap are
\[
 u^{[n_0]}_j(z)
 :=m\bigl(B(0,r^{[n_0]}_j)\cap B(z,r^{[n_0]}_j)\bigr),
 \qquad
 H^{[n_0]}(z)
 :=\sum_{j\ge1}u^{[n_0]}_j(z)
 =\sum_{n\ge n_0}u_n(z).
\]
Since
\[
 0\le H(z)-H^{[n_0]}(z)
 \le\sum_{n<n_0}v_n<\infty,
\]
the integrals of $\e^H$ and $\e^{H^{[n_0]}}$ diverge simultaneously.  Apply
the constructions of
\Cref{sec:analytic-localization,sec:poisson-decomposition,sec:spatial-application}
to $(r^{[n_0]}_j)_{j\ge1}$, decorating all
scale, block, residual-set, and population variables by $[n_0]$.  If one of
its mean populations vanishes, then
\[
 \bigl[\exists j:m_j^{[n_0]}=0\bigr]
 \Longrightarrow
 \Pp(\Xi_j^{[n_0]}=0)=1.
\]
Otherwise, the divergence estimate and the abstract extinction criterion give
\[
 \int_{\T}\e^{H^{[n_0]}}\,\dd m=\infty
 \overset{\eqref{eq:actual-resistance-diverges}}{\Longrightarrow}
 \sum_j\frac{\delta_j^{[n_0]}}{m_j^{[n_0]}}=\infty
 \overset{\text{\Cref{thm:shared-resistance}}}{\Longrightarrow}
 \Pp(\exists j:\mathfrak R_j^{[n_0]}=\varnothing)=1.
\]
In both cases the tail is covered after finitely many of its radius types.
By \eqref{eq:poisson-superposition}, $(\Pi^{[n_0]}_j)_{j\ge1}$ has exactly
the law used in this tail construction.  Therefore
\[
 \Pp\!\left(
 \exists J\ge1:\
 \T\subseteq
 \bigcup_{j=1}^{J}\Cov(\Pi^{[n_0]}_j)
 \right)=1.
\]
Writing $n_1=n_0+J-1$ gives the event in
\eqref{eq:poisson-tail-cover} for this $n_0$.
The conclusion holds for each deterministic $n_0$; intersecting the resulting
probability-one events over \(n_0\in\N\) proves
\eqref{eq:poisson-tail-cover}.
\end{proof}

\section{De-Poissonization}\label{sec:depoissonization}

The Poissonized model may use more than one center at a given radius type,
whereas the original model has exactly one center for each index.  We reserve
a sparse deterministic set of original indices and use the remaining indices
to absorb the Poisson multiplicities.  The high-moment regime is disposed of
first by a direct net argument; in the complementary regime the reserved
indices have finite total volume cost.

\begin{lemma}[Borel tail property of full limsup coverage]
\label{lem:limsup-cover-borel-tail}
The subset
\[
 \left\{(x_n)_{n\ge1}\in(\T)^{\N}:
 \limsup_{n\to\infty}B(x_n,r_n)=\T\right\}
\]
of $(\T)^{\N}$ is Borel and belongs to the coordinate tail $\sigma$-field.
\end{lemma}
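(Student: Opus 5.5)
We plan to write the event as a countable Boolean combination of open/closed conditions on finitely many coordinates, and then observe that it does not depend on any finite prefix. Fix a countable dense set $D\subseteq\T$ (for instance rational points). Our first aim is to replace "every point of $\T$" by countable data.

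Write
\[
 U_M(\mathbf x):=\bigcup_{n\ge M}B(x_n,r_n).
\]
Since $\limsup_nB(x_n,r_n)=\bigcap_M U_M(\mathbf x)$, full limsup coverage is equivalent to $U_M(\mathbf x)=\T$ for every $M$. Each $U_M(\mathbf x)$ is a union of open balls, and $\T$ is compact. Hence $U_M(\mathbf x)=\T$ if and only if there is a finite $M'\ge M$ with $\bigcup_{n=M}^{M'}B(x_n,r_n)=\T$. So the event equals
\[
 \bigcap_{M\ge1}\ \bigcup_{M'\ge M}\ \Bigl\{\mathbf x:\ \T=\bigcup_{n=M}^{M'}B(x_n,r_n)\Bigr\}.
\]
It therefore suffices to show that, for fixed $M\le M'$, the set
\[
 F_{M,M'}:=\Bigl\{(x_M,\dots,x_{M'}):\ \bigcup_{n=M}^{M'}B(x_n,r_n)=\T\Bigr\}
\]
is Borel in $(\T)^{M'-M+1}$.

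We will show $F_{M,M'}$ is in fact open. Suppose the finitely many open balls cover the compact set $\T$. By the Lebesgue number lemma there is $\varepsilon>0$ such that every point $y$ satisfies $d_{\T}(y,x_n)<r_n-\varepsilon$ for some $n$. Now move each center by less than $\varepsilon$ to get $x'_n$. Then $d_{\T}(y,x'_n)<r_n$ for the same $n$, so the perturbed balls still cover $\T$. This $\varepsilon$-margin step is the one place needing care. Alternatively, one can avoid it by describing the complement: noncoverage means some $y$ lies outside all the balls. That set is the projection of a closed set along a compact factor, hence closed. With $F_{M,M'}$ open, its preimage under the coordinate projection is open, and the whole event is a countable intersection of countable unions of such sets, hence Borel.

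For the tail property, fix $K$. Since $\limsup_n B(x_n,r_n)=\limsup_{n>K}B(x_n,r_n)$, the event above equals the same expression with $M$ ranging only over $M>K$: the sets $U_M$ decrease in $M$, so the intersection over $M\ge1$ equals the intersection over $M>K$. That expression involves only the coordinates $(x_n)_{n>K}$, and by the argument above it is Borel measurable with respect to them. So the event lies in $\sigma(x_n:n>K)$ for every $K$, hence in the tail $\sigma$-field. The only step needing genuine care is the compactness reduction from an uncountable intersection to finite subcovers, and the openness of $F_{M,M'}$; everything else is bookkeeping.
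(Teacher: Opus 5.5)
Your proposal is correct and takes essentially the paper's route: the same compactness reduction to $\bigcap_{M}\bigcup_{M'\ge M}$ of finite-cover events, openness of each finite-cover event, and the same restriction of the outer intersection to $M>K$ for the tail property. The paper expresses openness as $\{\Gamma_{m,N}<0\}$ for the continuous function $\Gamma_{m,N}(x_m,\ldots,x_N)=\max_{y}\min_n(d_{\T}(y,x_n)-r_n)$; the negativity of this maximum is the real source of your uniform margin $\varepsilon$, rather than the Lebesgue number lemma, and the countable dense set $D$ you introduce is never used.
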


\begin{proof}
Compactness gives
\begin{equation}\label{eq:tail-event}
\begin{aligned}
 &\left\{(x_n)_{n\ge1}\in(\T)^{\N}:
   \limsup_{n\to\infty}B(x_n,r_n)=\T\right\}\\
 &\qquad=
 \bigcap_{m\ge1}\ \bigcup_{N\ge m}
 \left\{(x_n)_{n\ge1}:
 \T\subseteq\bigcup_{n=m}^{N}B(x_n,r_n)\right\}.
\end{aligned}
\end{equation}
For fixed $1\le m\le N$, define
\[
 \begin{aligned}
 \Gamma_{m,N}:(\T)^{N-m+1}&\longrightarrow\R,\\
 (x_m,\ldots,x_N)&\longmapsto
 \max_{y\in\T}\min_{m\le n\le N}
 \bigl(d_{\T}(y,x_n)-r_n\bigr).
 \end{aligned}
\]
The map $\Gamma_{m,N}$ is continuous, and
\[
 \begin{aligned}
 &\left\{(x_n)_{n\ge1}\in(\T)^{\N}:
 \T\subseteq\bigcup_{n=m}^{N}B(x_n,r_n)\right\}\\
 &\qquad=
 \left\{(x_n)_{n\ge1}\in(\T)^{\N}:
 \Gamma_{m,N}(x_m,\ldots,x_N)<0\right\}
 \in\mathcal B((\T)^{\N}).
 \end{aligned}
\]
Thus \eqref{eq:tail-event} is Borel.  For every $s\ge1$,
the right-hand side is unchanged if the outer intersection is restricted to
$m\ge s$; it then depends only on coordinates with index at least $s$.
Hence the event belongs to the coordinate tail $\sigma$-field.
\end{proof}

\begin{lemma}[High-moment regime]\label{lem:high-moment}
\[
 \left(\exists\varepsilon>0:\sum_nv_n^{1+\varepsilon}=\infty\right)
 \Longrightarrow \Pp(E_r=\T)=1.
\]
\end{lemma}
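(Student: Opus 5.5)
The plan is a direct net argument combined with the second Borel--Cantelli lemma over independent groups of indices; \Cref{lem:limsup-cover-borel-tail} is not needed. After deleting a finite prefix as in \Cref{sec:model}, we have $r_n<1/4$ and $v_n=\kappa_dr_n^d$. Group the indices dyadically by volume:
\[
 J_k:=\{n:2^{-k-1}<v_n\le2^{-k}\}.
\]
Since $v_n\downarrow0$, each $J_k$ is a finite (possibly empty) interval of indices, and the $J_k$ are disjoint. The hypothesis then reads
\[
 \sum_k|J_k|\,2^{-k(1+\varepsilon)}\ \ge\ \sum_nv_n^{1+\varepsilon}=\infty .
\]
Within one block all radii are comparable: $r_{\min,k}\ge2^{-1/d}r_{\max,k}$, where $r_{\min,k}$ and $r_{\max,k}$ are the smallest and largest radii in $J_k$.

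Fix $k$ and let $Y_k$ be a maximal $r_{\min,k}/2$-separated subset of $\T$. It is an $r_{\min,k}/2$-net with $|Y_k|\le C_dr_{\min,k}^{-d}\le C_d2^{k}$. Call block $k$ \emph{good} if every $y\in Y_k$ lies in $B(X_n,r_n/2)$ for some $n\in J_k$.

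If block $k$ is good, then every $x\in\T$ is within $r_{\min,k}/2\le r_n/2$ of some $y\in Y_k$, with $y\in B(X_n,r_n/2)$ for some $n\in J_k$. Hence $x\in B(X_n,r_n)$, so $\T\subseteq\bigcup_{n\in J_k}B(X_n,r_n)$. For a fixed $y$, the events $\{y\notin B(X_n,r_n/2)\}$, $n\in J_k$, are independent with probability $1-2^{-d}v_n$, because $r_n<1/4$ makes Haar measure of the half-ball Euclidean. A union bound over the net then gives
\[
 \Pp(\text{block }k\text{ not good})\le C_d2^k\exp\bigl(-2^{-d-1}|J_k|2^{-k}\bigr).
\]

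The key step, and the one I expect to need care, is extracting infinitely many blocks that are good with probability bounded below. Choose a constant $C'$ large enough, depending on $d$, that $C_d2^k\e^{-2^{-d-1}C'k}\le1/2$ for all $k\ge1$. Suppose $|J_k|2^{-k}\le C'k$ for all large $k$. Then
\[
 \sum_k|J_k|2^{-k(1+\varepsilon)}\le C+C'\sum_kk2^{-k\varepsilon}<\infty,
\]
contradicting the hypothesis. Hence there are infinitely many $k$ with $|J_k|2^{-k}\ge C'k$, and for each of them $\Pp(\text{block }k\text{ good})\ge1/2$.

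The good-block events depend on disjoint sets of the independent centres $X_n$, so they are independent. The sum of their probabilities diverges, so the second Borel--Cantelli lemma shows that almost surely infinitely many blocks are good. Each good block covers $\T$ using indices in $J_k$, and $\min J_k\to\infty$. Therefore every point lies in infinitely many balls $B(X_n,r_n)$, which gives $\Pp(E_r=\T)=1$. The only points left to check are the net cardinality bound and the Euclidean identification of Haar measure at radii below $1/4$; both are already available from \Cref{lem:overlap-profile} and the conventions of \Cref{sec:model}.
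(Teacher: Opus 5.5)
Your proof is correct. It differs from the paper's in how the indices are organized, but the core estimate is the same: a net of cardinality $C_d v^{-1}$ at half the smallest radius in use, covered by half-balls, with a union bound against $\exp(-2^{-d}\cdot\text{accumulated volume})$.

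The paper does not partition the indices. It fixes $0<\gamma<\varepsilon$ and selects the infinite set $\mathcal K_\varepsilon=\{k:v_k^{1+\varepsilon}\ge k^{-(1+\gamma)}\}$. For $k\in\mathcal K_\varepsilon$ it uses the whole window $n_0\le j\le k$. Monotonicity gives $v_j\ge v_k$ there, so the accumulated volume is at least $(k-n_0+1)v_k$, which grows like a positive power of $k$. The failure probabilities are then summable over $\mathcal K_\varepsilon$. Hence the first Borel--Cantelli lemma applies without any independence across $k$ (the windows overlap), and one finishes by intersecting over $n_0$.

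You instead cut the indices into disjoint dyadic volume shells $J_k$. The summation argument gives infinitely many shells with $|J_k|2^{-k}\ge C'k$, and each such shell need only succeed with probability at least $1/2$. Disjointness gives independence, the second Borel--Cantelli lemma gives infinitely many successes, and $\min J_k\to\infty$ handles the tail without a separate $n_0$.

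Your route dispenses with the auxiliary exponent $\gamma$ and the $n_0$-bookkeeping, at the price of relying on independence between blocks. The paper's route yields a stronger conclusion: for each $n_0$, all sufficiently large $k\in\mathcal K_\varepsilon$ succeed.

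One small caveat: no zero--one law is needed, but asserting $\Pp(E_r=\T)=1$ still presupposes that $\{E_r=\T\}$ is an event. That is what \Cref{lem:limsup-cover-borel-tail}, or completeness of the probability space, supplies.
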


\begin{proof}
Fix $\varepsilon>0$ such that the displayed series diverges, choose
$0<\gamma<\varepsilon$, and set
\[
 \mathcal K_\varepsilon
 :=\{k\in\N:v_k^{1+\varepsilon}\ge k^{-(1+\gamma)}\}.
\]
Then $|\mathcal K_\varepsilon|=\infty$.  For $k\in\mathcal K_\varepsilon$,
choose
\[
 \mathcal G_k\subseteq\T,\qquad
 \T=\bigcup_{y\in\mathcal G_k}B(y,r_k/2),\qquad
 |\mathcal G_k|\le C_dv_k^{-1}.
\]
Such a set is obtained from a maximal $r_k/2$-separated set; the disjoint
$r_k/4$-balls give the cardinality bound.  If
$\mathcal K_\varepsilon$ were finite, then eventually
$v_k^{1+\varepsilon}<k^{-(1+\gamma)}$, contradicting divergence of the
series in the hypothesis.
For each fixed $n_0\in\N$, there exists $k_0=k_0(n_0)$ such that, for every
$k\in\mathcal K_\varepsilon$ with $k\ge k_0$,
\begin{align*}
 \Pp\!\left(\mathcal G_k\not\subseteq
 \bigcup_{j=n_0}^{k}B(X_j,r_j/2)\right)
 &\le C_dv_k^{-1}
 \exp[-2^{-d}(k-n_0+1)v_k]\\
 &\le C_dk^{(1+\gamma)/(1+\varepsilon)}
 \exp[-2^{-d-1}k^{(\varepsilon-\gamma)/(1+\varepsilon)}].
\end{align*}
Moreover,
\[
 \mathcal G_k\subseteq\bigcup_{j=n_0}^{k}B(X_j,r_j/2)
 \Longrightarrow
 \T\subseteq\bigcup_{j=n_0}^{k}B(X_j,r_j),
\]
because $r_j\ge r_k$ for $j\le k$.  Borel--Cantelli, followed by the
intersection over $n_0\in\N$, gives $\Pp(E_r=\T)=1$.
\end{proof}

Assume henceforth
\[
 V_{4/3}:=\sum_nv_n^{4/3}<\infty,\qquad
 \zeta_q:=\lceil q^{3/2}\rceil,\qquad
 \mathcal R:=\{\zeta_q:q\ge1\}.
\]
The remaining construction follows the route
\[
 \begin{aligned}
 \text{sparse reserve}
 &\longrightarrow \text{retained Poisson clouds}
 \longrightarrow \text{ranked i.i.d. centers}\\
 &\longrightarrow \text{eventual rank domination}
 \longrightarrow \text{original covering}.
 \end{aligned}
\]
Then
\begin{align}
 |\mathcal R\cap[1,N]|&=\lfloor N^{2/3}\rfloor,\label{eq:reserve-count}\\
 v_n&\le V_{4/3}^{3/4}n^{-3/4},\notag\\
 \Lambda_{\mathcal R}:=\sum_{n\in\mathcal R}v_n
 &\le V_{4/3}^{3/4}\sum_{q\ge1}q^{-9/8}<\infty.\label{eq:reserve-cost}
\end{align}
For the second estimate, monotonicity of $(v_n)_n$ gives
$nv_n^{4/3}\le\sum_{j\le n}v_j^{4/3}\le V_{4/3}$.
For $H^{\mathrm{ret}}:=\sum_{n\notin\mathcal R}u_n$,
\begin{equation}\label{eq:energy-retained}
 H^{\mathrm{ret}}\ge H-\Lambda_{\mathcal R},\qquad
 \int\e^H=\infty\Longrightarrow\int\e^{H^{\mathrm{ret}}}=\infty.
\end{equation}
Let $(\widetilde P_n)_{n\notin\mathcal R}$ be independent with
$\widetilde P_n\sim\Pois(1)$, and put $\widetilde P_n:=0$ for
$n\in\mathcal R$.  Then
\[
 P_N^{\mathrm{ret}}:=\sum_{n\le N}\widetilde P_n
 \sim\Pois(N-\lfloor N^{2/3}\rfloor)
\]
and, with
$\mu_N=N-\lfloor N^{2/3}\rfloor$ and
$t_N=N-\mu_N$,
\[
 \Pp(P_N^{\mathrm{ret}}>N)
 \le
 \exp\!\left[-\frac{t_N^2}{2(\mu_N+t_N/3)}\right]
 \le\e^{-cN^{1/3}}.
\]

Take $(U_j)_{j\ge1}\stackrel{\mathrm{iid}}{\sim}m$ independently of
$(\widetilde P_n)_n$, and set
\[
 E_r^U:=\bigcap_{M\ge1}\bigcup_{j\ge M}B(U_j,r_j).
\]
For
$n\notin\mathcal R$, $1\le q\le\widetilde P_n$,
\begin{equation}\label{eq:rank}
 \operatorname{rk}(n,q):=\sum_{m<n}\widetilde P_m+q.
\end{equation}
This map is injective on the admissible pairs.  Indeed, if $n<n'$, then
for $1\le q\le\widetilde P_n$ and
$1\le q'\le\widetilde P_{n'}$,
\[
 \operatorname{rk}(n,q)
 \le \sum_{m\le n}\widetilde P_m
 \le \sum_{m<n'}\widetilde P_m
 < \operatorname{rk}(n',q'),
\]
while injectivity for fixed $n$ follows directly from the definition.
Conditional on the counts,
\[
 (U_{\operatorname{rk}(n,q)})_{n,q}\stackrel{\mathrm{iid}}{\sim}m.
\]
Coverage depends only on the active prefix of a cloud.  Accordingly, for
$\omega=(q,\mathbf x)\in\Cloud(\T)$, put
\[
 \operatorname{act}(\omega)
 :=(q,\mathbf x|_{[q]})\in\Pop(\T).
\]
Define the retained active populations and their covering sets by
\begin{equation}\label{eq:retained-cover-sets}
 \begin{aligned}
 \widetilde\Gamma_n
 &:={}
 \left(
 \widetilde P_n,
 \left(
 U_{\sum_{m<n}\widetilde P_m+i+1}
 \right)_{i\in[\widetilde P_n]}
 \right)
 \in\Pop(\T),\\
 \widetilde C_n
 &:={}
 \bigcup_{i\in[\widetilde P_n]}
 B\!\left(U_{\sum_{m<n}\widetilde P_m+i+1},r_n\right).
 \end{aligned}
\end{equation}
Let
\[
  \iota(1)<\iota(2)<\cdots,\qquad
 \{\iota(q):q\ge1\}=\N\setminus\mathcal R,
\]
be the increasing enumeration of the retained indices, and put
$r'_q:=r_{\iota(q)}$.  Let
$(\Pi_q^{\prime,\mathrm{full}})_{q\ge1}$ denote the independent rate-one
Poisson clouds in the construction of \Cref{thm:poisson-sufficiency} for
the radius sequence $(r'_q)_q$.  For every finite $J\subset\N$, conditioning
on the counts and using injectivity of $\operatorname{rk}$ gives
\[
 \bigl(\widetilde\Gamma_{\iota(q)}\bigr)_{q\in J}
 \stackrel{\mathrm d}{=}
 \bigl(\operatorname{act}(\Pi_q^{\prime,\mathrm{full}})\bigr)_{q\in J}.
\]
Since $\Pop(\T)$ is standard Borel, the finite-dimensional identities give
\begin{equation}\label{eq:retained-cover-law}
 \bigl(\widetilde\Gamma_{\iota(q)}\bigr)_{q\ge1}
 \stackrel{\mathrm d}{=}
 \bigl(\operatorname{act}(\Pi_q^{\prime,\mathrm{full}})\bigr)_{q\ge1}.
\end{equation}
The map \(\operatorname{act}:\Cloud(\T)\to\Pop(\T)\) is Borel: for
\(n\in\mathbb N_0\) and \(A\in\mathcal B((\T)^{[n]})\),
\[
 \operatorname{act}^{-1}(\{n\}\times A)
 =
 \{n\}\times
 \{\mathbf x\in(\T)^{\mathbb N}:\mathbf x|_{[n]}\in A\}.
\]
For \(\boldsymbol\gamma=(\gamma_q)_{q\ge1}\in\Pop(\T)^{\mathbb N}\), write
\(\gamma_q=(n_q,\mathbf x_q)\) and set
\[
 C_q^{r'}(\boldsymbol\gamma)
 :=
 \bigcup_{i\in[n_q]}B(\mathbf x_q(i),r'_q).
\]
Compactness of \(\T\) gives
\[
 \mathcal E_{r'}
 :=
 \left\{\boldsymbol\gamma:
 \limsup_{q\to\infty}C_q^{r'}(\boldsymbol\gamma)=\T\right\}
 =
 \bigcap_{M\ge1}\bigcup_{N\ge M}
 \left\{\boldsymbol\gamma:
 \T\subseteq\bigcup_{q=M}^{N}C_q^{r'}(\boldsymbol\gamma)\right\}.
\]
Fix \(M\le N\) and a count vector
\(\mathbf n=(n_M,\ldots,n_N)\in\mathbb N_0^{N-M+1}\).  If
\(\sum_{q=M}^Nn_q>0\), then on this count stratum the finite-cover event is
\[
 \Phi_{M,N,\mathbf n}((x_{q,i}))<0,
 \qquad
 \Phi_{M,N,\mathbf n}((x_{q,i}))
 :=
 \max_{z\in\T}
 \min_{\substack{M\le q\le N\\ i\in[n_q]}}
 \bigl(d_{\T}(z,x_{q,i})-r'_q\bigr).
\]
The function \(\Phi_{M,N,\mathbf n}\) is continuous; for the all-zero count
vector the finite-cover event is empty.  The count strata form a countable
partition, hence \(\mathcal E_{r'}\) is Borel.  Therefore
\eqref{eq:retained-cover-law} transfers this event:
\[
 \Pp\!\left((\widetilde\Gamma_{\iota(q)})_{q\ge1}\in\mathcal E_{r'}\right)
 =
 \Pp\!\left(
 (\operatorname{act}(\Pi_q^{\prime,\mathrm{full}}))_{q\ge1}
 \in\mathcal E_{r'}\right).
\]
Moreover, $r'_q\downarrow0$, and its overlap function is
\[
 H'(z)
 :=\sum_{q\ge1}m(B(0,r'_q)\cap B(z,r'_q))
 =\sum_{n\notin\mathcal R}u_n(z)
 =H^{\mathrm{ret}}(z).
\]
Consequently, applying \Cref{thm:poisson-sufficiency} to the relabelled
sequence $(r'_q)_q$, then using \eqref{eq:retained-cover-law} and
\eqref{eq:energy-retained}, gives
\begin{equation}\label{eq:retained-poisson-cover}
 \int_{\T}\e^H\,\dd m=\infty
 \quad\Longrightarrow\quad
 \Pp\!\left(
 \limsup_{q\to\infty}\widetilde C_{\iota(q)}=\T
 \right)
 =1.
\end{equation}
Let
\[
 \Omega_0
 :=\{\exists N_0\ \forall N\ge N_0:\ P_N^{\mathrm{ret}}\le N\}.
\]
Then $\Pp(\Omega_0)=1$.  If $\omega\in\Omega_0$ and $N_0=N_0(\omega)$
witnesses the defining quantifier, then for every $n\ge N_0$ with
$n\notin\mathcal R$ and every $1\le q\le\widetilde P_n(\omega)$, with
$j:=\operatorname{rk}(n,q)$,
\[
 j\le P_n^{\mathrm{ret}}\le n,\qquad
 r_j\ge r_n,\qquad
 B(U_j,r_n)\subseteq B(U_j,r_j).
\]
By injectivity of $\operatorname{rk}$, distinct retained population entries
correspond to distinct original indices $j$.  Therefore
\begin{equation}\label{eq:rank-inclusion}
 \limsup_{\substack{n\to\infty\\n\notin\mathcal R}}
 \widetilde C_n
 \subseteq E_r^U\qquad\text{on }\Omega_0.
\end{equation}

\begin{theorem}[Sufficiency]\label{thm:sufficiency}
\[
 \int_{\T}\e^H\,\dd m=\infty\Longrightarrow\Pp(E_r=\T)=1.
\]
\end{theorem}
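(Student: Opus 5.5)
The argument splits into two regimes, according to whether some power $\sum_n v_n^{1+\varepsilon}$ diverges.

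\emph{High-moment regime.} Suppose $\sum_n v_n^{1+\varepsilon}=\infty$ for some $\varepsilon>0$. Then \Cref{lem:high-moment} gives $\Pp(E_r=\T)=1$ directly, with no use of the exponential condition. Since $v_n\downarrow0$, if $\sum_n v_n^{4/3}=\infty$ this case applies with $\varepsilon=1/3$. So we may assume $V_{4/3}<\infty$, which is exactly the standing hypothesis of the de-Poissonization construction. All the objects $\mathcal R$, $\widetilde P_n$, $U_j$, $\operatorname{rk}$, $\widetilde C_n$ and $\Omega_0$ are then available.

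\emph{Complementary regime.} Here I combine \eqref{eq:retained-poisson-cover} with the rank inclusion \eqref{eq:rank-inclusion}. Under $\int\e^H\,\dd m=\infty$, \eqref{eq:retained-poisson-cover} gives $\limsup_q\widetilde C_{\iota(q)}=\T$ almost surely. Because $\iota$ enumerates $\N\setminus\mathcal R$ increasingly, this set equals $\limsup_{n\to\infty,\,n\notin\mathcal R}\widetilde C_n$. On $\Omega_0$, which has probability one because $\sum_N\Pp(P_N^{\mathrm{ret}}>N)\le\sum_N\e^{-cN^{1/3}}<\infty$ and Borel--Cantelli applies, \eqref{eq:rank-inclusion} places this set inside $E_r^U$. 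Hence $\Pp(E_r^U=\T)=1$.

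One point needs checking in \eqref{eq:rank-inclusion}: a point lying in infinitely many $\widetilde C_n$ must lie in infinitely many original balls $B(U_j,r_j)$. This holds because distinct admissible pairs $(n,q)$ have distinct ranks $j$ by injectivity of $\operatorname{rk}$. Each retained $n$ has only finitely many $q$, so infinitely many $n$ yield infinitely many $j$.

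It remains to transfer the result from $(U_j)$ to $(X_n)$. The sequence $(U_j)_{j\ge1}$ is i.i.d.\ with law $m$, and it is independent of the counts. So $(U_j)_j\stackrel{\mathrm d}{=}(X_n)_n$ as random elements of $(\T)^{\N}$. By \Cref{lem:limsup-cover-borel-tail}, the full-coverage event is Borel in $(\T)^{\N}$, so $\Pp(E_r=\T)=\Pp(E_r^U=\T)=1$.

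The only delicate step is the index bookkeeping, namely that the retained-index limsup coincides with the limsup along $\iota(q)$ and that it embeds injectively into the original indices. Both follow from the monotonicity of $\iota$ and the injectivity of $\operatorname{rk}$ established above. Everything substantive has already been done in \Cref{thm:poisson-sufficiency} and \eqref{eq:retained-poisson-cover}.
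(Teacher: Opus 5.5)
Your proposal is correct and follows the paper's proof essentially verbatim. The paper also disposes of the case $\sum_n v_n^{4/3}=\infty$ via \Cref{lem:high-moment} with $\varepsilon=1/3$. Otherwise it chains \eqref{eq:retained-poisson-cover} and \eqref{eq:rank-inclusion} to get $\Pp(E_r^U=\T)=1$, then transfers to $(X_n)$ using the Borel property from \Cref{lem:limsup-cover-borel-tail} and the equality of product laws. Your extra checks, namely the Borel--Cantelli justification of $\Pp(\Omega_0)=1$ and the index bookkeeping through $\iota$ and $\operatorname{rk}$, are details the paper leaves implicit, and they are sound.
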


\begin{proof}
If $\sum_nv_n^{4/3}=\infty$, apply
\Cref{lem:high-moment} with $\varepsilon=1/3$.  If instead
$\sum_nv_n^{4/3}<\infty$, then
\[
 \int\e^H=\infty
 \overset{\eqref{eq:retained-poisson-cover}}{\Longrightarrow}
 \Pp\!\left(
 \limsup_{\substack{n\to\infty\\n\notin\mathcal R}}
 \widetilde C_n=\T\right)=1
 \overset{\eqref{eq:rank-inclusion}}{\Longrightarrow}
 \Pp(E_r^U=\T)=1.
\]
By \Cref{lem:limsup-cover-borel-tail}, the full-limsup-covering event is
Borel.  Since $(U_j)_{j\ge1}$ and $(X_j)_{j\ge1}$ have the same product law,
\[
 \Pp(E_r=\T)=\Pp(E_r^U=\T)=1.\qedhere
\]
\end{proof}

\section{Integrability implies noncovering}\label{sec:necessity}

The converse uses the uncovered volume rather than the multiscale population.
When $\e^H$ is integrable, the second moment of the uncovered volume is
uniformly comparable to the square of its mean.  A point remains uncovered
with positive probability, and the tail zero--one law then rules out full
limsup covering.

\begin{theorem}[Necessity]\label{thm:necessity}
\[
 \int_{\T}\e^H\,\dd m<\infty
 \Longrightarrow
 \Pp(E_r=\T)=0.
\]
\end{theorem}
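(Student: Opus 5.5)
We prove \Cref{thm:necessity} with a second-moment argument applied to the uncovered volume of a tail of the sequence, followed by the zero--one law. Fix $M$ and $N>M$. Put
\[
 Y_{M,N}:=m\Bigl(\T\setminus\bigcup_{n=M+1}^{N}B(X_n,r_n)\Bigr).
\]
By Fubini and independence,
\[
 \Ee Y_{M,N}=\prod_{n=M+1}^N(1-v_n).
\]
Applying translation invariance to the second moment gives
\[
 \Ee Y_{M,N}^2=\int_{\T}\prod_{n=M+1}^N\bigl(1-2v_n+u_n(z)\bigr)\,\dd m(z).
\]
Taking the ratio, the factor for index $n$ is $(1-2v_n+u_n)/(1-v_n)^2$. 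We may assume $v_n\le1/2$ after deleting a prefix. The factor is then at most $1+u_n/(1-v_n)^2\le\exp(u_n(1+Cv_n))$, because $1-2v+u\le(1-v)^2+u$. This yields
\[
 \frac{\Ee Y_{M,N}^2}{(\Ee Y_{M,N})^2}\le\int_{\T}\exp\Bigl(\sum_{n>M}u_n(z)(1+Cv_n)\Bigr)\dd m(z).
\]
The perturbation $Cv_nu_n\le Cv_n^2$ sums to at most $C\sum_n v_n^2$. If $\sum_nv_n^2=\infty$, the integrand is bounded by this, so we need a separate argument. This case is impossible here: by \eqref{eq:int-u}, Jensen gives
\[
 \int\e^{H}\ge\exp\Bigl(\int H\Bigr)=\exp\Bigl(\sum_n v_n^2\Bigr),
\]
so integrability of $\e^H$ forces $\sum v_n^2<\infty$. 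The ratio is therefore bounded by $K:=\e^{C\sum v_n^2}\int\e^{H}\,\dd m<\infty$, uniformly in $M$ and $N$.

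Paley--Zygmund then gives $\Pp(Y_{M,N}>0)\ge1/K$ for all $N$. We must pass from positive uncovered volume to an uncovered point of the whole tail. Let $F_{M,N}:=\T\setminus\bigcup_{M<n\le N}B(X_n,r_n)$. This set is compact, since the balls are open, and decreasing in $N$. The events $\{F_{M,N}\ne\varnothing\}\supseteq\{Y_{M,N}>0\}$ decrease in $N$, so $\Pp(\bigcap_N\{F_{M,N}\neq\varnothing\})\ge1/K$. By compactness (nested nonempty compacts), on this event $\bigcap_NF_{M,N}\ne\varnothing$. Hence with probability at least $1/K$ some point lies in none of the balls $B(X_n,r_n)$ with $n>M$.

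To finish, note that $\{E_r\ne\T\}\supseteq\limsup_M\{\exists x\notin\bigcup_{n>M}B(X_n,r_n)\}$. The events in this limsup are decreasing in $M$, each has probability at least $1/K$, and hence the intersection has probability at least $1/K>0$. By \Cref{lem:limsup-cover-borel-tail}, $\{E_r=\T\}$ is a tail event. Kolmogorov's zero--one law then gives $\Pp(E_r=\T)\in\{0,1\}$, and since $\Pp(E_r\neq\T)\ge1/K$, it equals $0$. The main obstacle is the uniform bound on the second-moment ratio, specifically controlling the correction factor $(1-v_n)^{-2}$ uniformly in $M$ and $N$. The Jensen observation $\sum v_n^2<\infty$ is the step I expect to make it work. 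If the elementary factor inequality needs care for small $n$, I would first discard a finite prefix, which changes $\int\e^H$ by at most a bounded factor.
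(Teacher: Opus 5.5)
Your proposal is correct and follows the paper's proof essentially step for step: the first and second moments of the uncovered tail volume, a per-factor bound giving a ratio of at most $\e^{C\sum_n v_n^2}\int\e^H\,\dd m$, then Paley--Zygmund, nested compact sets, and Kolmogorov's zero--one law via \Cref{lem:limsup-cover-borel-tail}; the only difference is that you obtain $\sum_n v_n^2<\infty$ from Jensen, whereas the paper uses $H\le\e^H$. One harmless slip: the events $\{\exists x\notin\bigcup_{n>M}B(X_n,r_n)\}$ increase rather than decrease in $M$, but the limsup over $M$ is unnecessary anyway, since a single tail index already gives $\Pp(E_r\ne\T)\ge1/K$, as in the paper.
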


\begin{proof}
Assume
\begin{equation}\label{eq:finite-energy}
 I_H:=\int_{\T}\e^{H(z)}\,\dd m(z)<\infty.
\end{equation}
Then, by Tonelli and \eqref{eq:int-u},
\begin{equation}\label{eq:square-sum}
 \sum_nv_n^2=\int_{\T}H\,\dd m\le I_H<\infty.
\end{equation}
Here $H\le\e^H$ because $H\ge0$.
Choose $s$ so that $v_n\le1/4$ for $n\ge s$.  Put
\begin{align}
 F_{s,N}&:=\T\setminus\bigcup_{n=s}^{s+N}B(X_n,r_n),\notag\\
 Y_{s,N}&:=m(F_{s,N}),\notag\\
 p_{s,N}&:=\prod_{n=s}^{s+N}(1-v_n),\notag\\
 K_{s,N}(z)&:=\prod_{n=s}^{s+N}
 \frac{1-2v_n+u_n(z)}{(1-v_n)^2}.
 \label{eq:finite-tail-objects}
\end{align}
Independence, Tonelli, and translation invariance give
\begin{equation}\label{eq:moments}
 \Ee Y_{s,N}=p_{s,N},\qquad
 \Ee Y_{s,N}^2=p_{s,N}^2\int_{\T}K_{s,N}(z)\,\dd m(z).
\end{equation}
Indeed, the joint probability that two points $x,y$ avoid the $n$th ball is
$1-2v_n+u_n(y-x)$; integrating first over $(x,y)$ and then using the
translation $z=y-x$ gives the second identity.
For $0\le u\le v\le1/4$,
\[
 0\le\frac{1-2v+u}{(1-v)^2}
 \le1+u+4v^2\le\e^{u+4v^2}.
\]
Hence, with
\[
 C_*:=\exp\!\left(4\sum_nv_n^2\right)I_H<\infty,
\]
\begin{equation}\label{eq:second-moment-bound}
 K_{s,N}(z)\le\exp\!\left(4\sum_nv_n^2\right)\e^{H(z)},\qquad
 \Ee Y_{s,N}^2\le C_*p_{s,N}^2.
\end{equation}
Therefore
\begin{equation}\label{eq:positive-residual}
 \Pp(Y_{s,N}>0)
 \ge\frac{(\Ee Y_{s,N})^2}{\Ee Y_{s,N}^2}
 \ge C_*^{-1}.
\end{equation}
Since $F_{s,N+1}\subseteq F_{s,N}$ are compact,
\[
 \Pp\!\left(\bigcap_{N\ge0}F_{s,N}\ne\varnothing\right)
 \ge C_*^{-1}>0.
\]
On this event a point is missed by every ball with index at least $s$, and
therefore cannot belong to $E_r$.  Hence $\Pp(E_r=\T)<1$.

By \Cref{lem:limsup-cover-borel-tail}, $\{E_r=\T\}$ is a tail event.
Kolmogorov's zero--one law and the strict upper bound just proved give
$\Pp(E_r=\T)=0$.
\end{proof}

\section{Proof of the main criterion}

\begin{proof}[Proof of \Cref{thm:main}]
\[
 \int_{\T}\e^H\,\dd m=\infty
 \overset{\text{\Cref{thm:sufficiency}}}{\Longrightarrow}
 \Pp(E_r=\T)=1,
\]
and
\[
 \int_{\T}\e^H\,\dd m<\infty
 \overset{\text{\Cref{thm:necessity}}}{\Longrightarrow}
 \Pp(E_r=\T)=0.
\]
\end{proof}

\section{Code availability and AI-use disclosure}

\subsection*{Code availability}

A Lean~4 formalization accompanying \Cref{thm:main} and its principal
supporting results is available at
\href{https://github.com/zyc111-234/shepp-formalization}
{github.com/zyc111-234/shepp-formalization}.  The repository contains the
paper-facing verification source, pinned Lean and Mathlib dependencies,
automated checks, and reproduction instructions.

\subsection*{AI-use disclosure}

AI-assisted tools, specifically OpenAI's ChatGPT and Codex, provided
substantive assistance with preliminary literature screening, proof
exploration, Lean~4 code generation and refactoring, consistency checks between
the formal development and the manuscript, selected stages of proof auditing,
manuscript organization, and language editing.  Their outputs were treated as
suggestions rather than as authoritative sources.  The author determined the
mathematical statements and scope, reviewed the arguments and their formal
counterparts, and assumes full responsibility for every mathematical claim,
proof, citation, and statement of priority in this paper.  No AI system is
credited as an author.

\appendix
\footnotesize

\section{Measurability of the transition maps}\label{app:measurability}

Let $E$ be compact metric, let $F$ be finite, and fix
$\rho:F\to(0,\infty)$.  Put
\[
 \mathcal K_0(E):=\mathcal K_+(E)\sqcup\{\varnothing\},
 \qquad
 \Cloud_F(E)=\prod_{a\in F}\bigl(\mathbb N_0\times E^{\mathbb N}\bigr),
\]
where $\mathcal K_+(E)$ has its Hausdorff Borel structure,
$\varnothing$ is isolated, and $\Cloud_F(E)$ has the product Borel
structure.  These are standard Borel spaces.  The required maps are
\begin{align}
 I:\mathcal K_0(E)^2&\longrightarrow\mathcal K_0(E),
 &(A,Q)&\longmapsto A\cap Q,\label{eq:appendix-intersection}\\
 D_\rho:\mathcal K_0(E)\times\Cloud_F(E)&\longrightarrow\mathcal K_0(E),
 &(A,\omega)&\longmapsto A\setminus\Cov_{E,\rho}(\omega).
 \label{eq:appendix-deletion}
\end{align}

The Hausdorff Borel structure on $\mathcal K_+(E)$ is generated by
\[
 \mathcal H_U:=\{K\in\mathcal K_+(E):K\cap U\ne\varnothing\},
 \qquad U\subseteq E\text{ open};
\]
see \cite[Chapter~1]{Molchanov2017}.  For $A\in\mathcal K_+(E)$, write
\[
 d_E(x,A):=\min_{a\in A}d_E(x,a).
\]
Fix nonempty open $U\subseteq E$ and define
\[
 K_m(U):=
 \begin{cases}
  \{x\in E:d_E(x,E\setminus U)\ge m^{-1}\},&U\ne E,\\
  E,&U=E,
 \end{cases}
 \qquad
 \mathcal M_U:=\{m\in\N:K_m(U)\ne\varnothing\}.
\]
Then $K_m(U)\uparrow U$, and for nonempty $A,Q$,
\[
 (A\cap Q)\cap U\ne\varnothing
 \Longleftrightarrow
 \exists m\in\mathcal M_U:\
 \min_{x\in K_m(U)}
 \bigl(d_E(x,A)+d_E(x,Q)\bigr)=0.
\]
Each minimum is continuous in $(A,Q)$.  The hit-set preimages and the
empty-output locus are therefore Borel, proving that
\eqref{eq:appendix-intersection} is Borel.

For $\mathbf q=(q_a)_{a\in F}\in\mathbb N_0^F$, let
\[
 \Cloud_F^{\mathbf q}(E)
 :=\{\omega\in\Cloud_F(E):(\omega_a)_1=q_a\text{ for every }a\in F\}.
\]
These Borel strata form a countable partition.  On
$\mathcal K_+(E)\times\Cloud_F^{\mathbf q}(E)$, write
$\omega_a=(q_a,\mathbf x_a)$ and, for $m\in\mathcal M_U$, define
\[
 \Phi_{m,\mathbf q}(A,\omega)
 :=
 \min_{y\in K_m(U)}
 \left[
  d_E(y,A)
  +\sum_{a\in F}\sum_{0\le i<q_a}
    \bigl(\rho(a)-d_E(y,\mathbf x_a(i))\bigr)_+
 \right].
\]
It depends on finitely many coordinate projections and is continuous on
each stratum.  Moreover,
\[
 D_\rho(A,\omega)\cap U\ne\varnothing
 \Longleftrightarrow
 \exists m\in\mathcal M_U:\ \Phi_{m,\mathbf q}(A,\omega)=0.
\]
Thus every hit-set preimage is Borel on each stratum and hence on their
countable union.  Taking $U=E$ handles the empty-output locus, so
\eqref{eq:appendix-deletion} is Borel.

For every standard Borel $S$, give
\[
 \Pop(S)=\bigsqcup_{q\in\mathbb N_0}S^{[q]}
\]
the coproduct Borel structure.  If
$S^\dagger=S\sqcup\{\dagger\}$ with $\dagger$ isolated, define
\[
 \operatorname{live}:\Pop(S^\dagger)\longrightarrow\Pop(S)
\]
by deleting the cemetery coordinates in increasing index order.  For fixed
$q$ and $J\subseteq[q]$, the stratum
\[
 \{(q,\mathbf x):
   \mathbf x_i\ne\dagger\Longleftrightarrow i\in J\}
\]
is Borel, and $\operatorname{live}$ restricts there to the finite coordinate
projection indexed by $J$.  Hence $\operatorname{live}$ is Borel.

Let $C$ be a finite totally ordered set and
\[
 T:S^\dagger\times W\longrightarrow(S'^\dagger)^C
\]
be Borel.  On the branch $\{q\}\times S^{[q]}$, the raw update
\[
 (q,\mathbf x,w)\longmapsto
 \left(
 q|C|,
 \bigl(T_c(\mathbf x_r,w)\bigr)_{(r,c)\in[q]\times C}
 \right)
\]
uses the parent-first lexicographic order on \([q]\times C\) and is a finite
product of Borel coordinate maps.  Composition with
$\operatorname{live}$ proves that the update in
\eqref{eq:shared-update} is Borel.

Apply this with $S=\mathcal S_{\tau(k,p-1)}$, $W=\Cloud_{\mathcal N_k}(\T)$,
and $D_\rho$ at deletion times; apply
\eqref{eq:appendix-intersection} at refinement times.  The maps in
\eqref{eq:deletion-transition}--\eqref{eq:refinement-transition}, the
populations $\Xi_j$, and their finite-population updates are Borel.  The
Bellman functions are Borel by the backward induction in
\Cref{lem:bellman-properties}.

\section{Probabilistic inputs}

Let $E$ be nonempty standard Borel.  Let
$\omega,\omega'\in\Cloud(E)$ be independent Poisson clouds with common
center law $\mu$ and respective count rates $\lambda,\lambda'$.  Then
\[
 \omega\oplus\omega'
 \stackrel{\mathrm d}{=}
 \bigl(P,(X_i)_{i\ge0}\bigr),
 \qquad
 P\sim\Pois(\lambda+\lambda'),
 \qquad
 (X_i)_{i\ge0}\stackrel{\mathrm{iid}}{\sim}\mu,
\]
with $P\perp\!\!\!\perp(X_i)_i$.  For finite $F$ and
$\omega=(q_a,\mathbf x_a)_{a\in F}\in\Cloud_F(E)$,
define
\[
 J_F(\omega)
 :=
 \sum_{a\in F}\sum_{i<q_a}\delta_{(\mathbf x_a(i),a)},
 \qquad
 \omega\le_F\omega'
 \Longleftrightarrow
 [\omega_a]\subseteq[\omega'_a]\quad(a\in F).
\]
Thus $\le_F$ is the active-point preorder.

\begin{lemma}[Cloud positive association]\label{lem:cloud-association}
Let $\mu$ be a probability measure and let the coordinates of $\Cloud_F(E)$ be
independent Poisson clouds with center law $\mu$ and rates
$(\lambda_a)_{a\in F}$.  Their product law satisfies
\[
 \Ee\prod_{i=1}^qG_i(\omega)
 \ge
 \prod_{i=1}^q\Ee G_i(\omega)
 \quad
 \substack{q\ge1;\\
 G_i\text{ bounded, nonnegative, Borel, increasing for }\le_F.}
\]
\end{lemma}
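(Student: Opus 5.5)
The plan is to reduce the statement to the Harris--FKG inequality for Poisson point processes, \cite[Theorem~20.4]{LastPenrose2018}, and then pass from two factors to $q$ factors by induction.

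\textbf{Reduction to a marked Poisson process.} Under the product law, $J_F(\omega)$ is a Poisson point process on $E\times F$ with intensity $\sum_a\lambda_a\,\mu\otimes\delta_a$. Indeed, each coordinate gives a Poisson number of i.i.d. points, so it is a Poisson process, and independent coordinates superpose to a Poisson process on the disjoint union. The preorder $\le_F$ compares only the active point sets $[\omega_a]$, and so does the covering map. I would therefore first check that every bounded Borel $G_i$ which is increasing for $\le_F$ factors as $G_i=g_i\circ J_F$. This needs some care. Two clouds with the same active sets, counted with multiplicity, are $\le_F$-equivalent in both directions, so $G_i$ is constant on each fibre of the map $\omega\mapsto([\omega_a])_a$. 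Hence $G_i$ is constant on the fibres of $J_F$.

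Measurability of $g_i$ is the delicate point. On each count stratum, $J_F$ is a symmetrization of finitely many coordinates. I would take $g_i(\eta)$ to be $G_i$ evaluated at a Borel selection: list the atoms of $\eta$ in a fixed Borel linear order of $E$ and fill the inactive tail with a fixed point. This is Borel, and it agrees with $G_i$ by the fibre constancy above.

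\textbf{Monotonicity on counting measures.} Next I would check that $g_i$ is increasing for inclusion of counting measures. If $\eta\le\eta'$, then after lifting, the active sets satisfy $[\omega_a]\subseteq[\omega'_a]$. This holds even with multiplicities, since set inclusion ignores them. So $g_i(\eta)\le g_i(\eta')$. In particular $g_i$ is increasing in the sense $g(\eta)\le g(\eta+\delta_x)$ required by Last--Penrose.

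\textbf{Harris--FKG and induction.} Theorem~20.4 of \cite{LastPenrose2018} then gives $\Ee[fg]\ge\Ee f\,\Ee g$ for bounded increasing measurable $f,g$ of a Poisson process with $\sigma$-finite intensity. Here the intensity is finite, with total mass $\sum_a\lambda_a$. The $q$-fold statement follows by induction. Products of nonnegative bounded increasing functions are again nonnegative, bounded and increasing, so
\[
 \Ee\prod_{i\le q}G_i\ge\Ee G_q\,\Ee\prod_{i<q}G_i\ge\prod_{i\le q}\Ee G_i .
\]

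\textbf{Alternative route if the factorization is awkward.} One could instead prove the inequality directly. Conditioning on the counts reduces the problem to i.i.d. points, which gives no monotonicity. I would therefore not do that. The honest alternative is to approximate $E$ by finite partitions, where Poisson counts on cells are independent and Harris' inequality for products of Poisson variables on $\mathbb N_0$ applies. One would then pass to the limit by martingale convergence. I expect the main obstacle to be exactly this measurability and well-definedness bookkeeping between $\Cloud_F(E)$ and counting measures, rather than the inequality itself, which is quoted.
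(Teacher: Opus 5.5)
Your proposal is correct and follows the paper's proof essentially step by step. You push forward by $J_F$ to a finite-intensity Poisson process on $E\times F$. You use fibre-constancy of $\le_F$-increasing functions together with a Borel section to write $G_i=\overline G_i\circ J_F$ with $\overline G_i$ increasing; the paper gets the section from measurable enumeration \cite[Proposition~6.2]{LastPenrose2018}, where you use a Borel linear order. You then apply \cite[Theorem~20.4]{LastPenrose2018} and induct on $q$, as the paper does.
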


\begin{proof}
Let \(\mathbf N_f(E\times F)\) be the standard Borel space of finite
integer-valued measures, equipped with the counting-measure order.  The Borel
map \(J_F\) pushes the product cloud law to the finite-intensity Poisson law
with intensity
\[
 \Lambda(\dd x,\dd b)
 :=
 \sum_{a\in F}\lambda_a\,\mu(\dd x)\delta_a(\dd b).
\]
By measurable enumeration of finite point measures
\cite[Proposition~6.2]{LastPenrose2018}, with every atom repeated according to
its multiplicity, there is a Borel section
\[
 s:\mathbf N_f(E\times F)\longrightarrow\Cloud_F(E),
 \qquad
 J_F\circ s=\operatorname{id}.
\]
Every inactive tail is filled with a fixed point of \(E\).  For
\(\eta\in\mathbf N_f(E\times F)\), put
\[
 \operatorname{At}_a(\eta)
 :=
 \{x\in E:\eta(\{(x,a)\})>0\}.
\]
Then, for \(\omega\in\Cloud_F(E)\),
\[
 [s(\eta)_a]=\operatorname{At}_a(\eta),
 \qquad
 J_F(\omega)=\eta
 \Longrightarrow
 [\omega_a]=\operatorname{At}_a(\eta)
 \qquad(a\in F),
\]
and
\[
 \eta\le\eta'
 \Longrightarrow
 \operatorname{At}_a(\eta)\subseteq\operatorname{At}_a(\eta')
 \Longrightarrow
 s(\eta)\le_Fs(\eta').
\]
If \(G\) is increasing for \(\le_F\), then it is constant on every fiber of
\(J_F\).  Hence \(\overline G:=G\circ s\) is Borel and increasing on
\(\mathbf N_f(E\times F)\), and
\[
 G=\overline G\circ J_F.
\]
The finite-intensity form of Poisson positive association
\cite[Theorem~20.4]{LastPenrose2018}, followed by induction on $q$, proves the
assertion.
\end{proof}

Finally,
\[
 F_1\supseteq F_2\supseteq\cdots,
 \hspace{2em}F_i\ne\varnothing\text{ compact}
 \Longrightarrow
 \bigcap_iF_i\ne\varnothing.
\]
Tonelli, Chernoff bounds, Borel--Cantelli, Paley--Zygmund, and Kolmogorov's
zero--one law are invoked explicitly where used.

\end{document}